\newcommand{\qpltextnumero}{\bgroup\fontfamily{qpl}\selectfont\textnumero\egroup}
\tikzset{every picture/.style={line width=1pt}}
\tikzset{point/.style={circle,fill,draw,inner sep=0,minimum size=3pt}}
\tikzset{vertex/.style={circle,fill,draw,inner sep=0,minimum size=7pt}}
\tikzset{overtex/.style={circle,fill=none,draw,inner sep=0,minimum size=7pt}}
\newtheorem{theorem}{Theorem}[section]
\newtheorem{lemma}[theorem]{Lemma}
\newtheorem{cor}[theorem]{Corollary}
\newtheorem{proposition}[theorem]{Proposition}
\theoremstyle{definition}
\theoremstyle{remark}
\newtheorem{rem}{Remark}[section]
\newtheorem*{rem*}{Remark}
\newtheorem{question}{Question}[section]
\newcommand*{\R}{\mathbb{R}}
\newcommand*{\C}{\mathbb{C}}
\newcommand*{\bZ}{\mathbf{Z}}
\newcommand*{\B}{\mathcal{B}}
\newcommand*{\G}{\mathcal{G}}
\newcommand*{\N}{\mathcal{N}}
\newcommand*{\eps}{\varepsilon}
\newcommand{\var}{w}
\newcommand\ZZspacefactor{1.1}
\newcommand\myhpos{c}
\newcommand*{\scriptwidth}[1]{\widthof{$\scriptstyle\mathsurround=0pt#1$}}
\newcommand*{\maxscriptwidth}[2]{\maxof{\scriptwidth{#1}}{\scriptwidth{#2}}*\real{\ZZspacefactor}}
\def\centerinmax#1#2{%
\mathmakebox[\maxscriptwidth{#1}{#2}][\myhpos]{#1}%
}
\def\forlistlooptwo#1#2#3{%
    \ifboolexpr{test{\IfSubStr{#2}{,}} and test{\IfSubStr{#3}{,}}}{%
        % Evaluate first pair and call back with reduced lists if there are at least two elements in each list..
        \forlistlooptwohelper{#1}#2;#3;%
    }{%
        \ifboolexpr{test{\notblank{#2}} and test{\notblank{#3}}}{%
            % Evaluate last pair, if there is one element in each list.
            #1{#2}{#3}%
        }{}%
    }%
}
\def\forlistlooptwohelper#1#2,#3;#4,#5;{%
    % Call the handler with the first pair as the two arguments.
    #1{#2}{#4}%
    % Call the loop macro again with the lists reduced by the first pair.
    \forlistlooptwo{#1}{#3}{#5}%
}
\def\ZZ#1#2{%
\bZ^{\forlistlooptwo{\centerinmax}{#1}{#2}}_{\forlistlooptwo{\centerinmax}{#2}{#1}}
}
\title{On zero-free regions for the anti-ferromagnetic Potts model on bounded-degree graphs}
\author{Ferenc Bencs\footnote{HAS Alfr\'ed R\'enyi Institute of Mathematics; Department of Mathematics, Central European University; Korteweg de Vries Institute for Mathematics, University of Amsterdam. Email: \texttt{ferenc.bencs@gmail.com}. Partially supported by the MTA R\'enyi Institute Lend\"ulet Limits of Structures Research Group and by Doctoral Research Support Grant of CEU.} \and Ewan Davies\footnote{Korteweg de Vries Institute for Mathematics, University of Amsterdam. Email: \texttt{maths@ewandavies.org}. Supported by the European Research Council under the European Union's Seventh Framework Programme (FP7/2007-2013) / ERC grant agreement \qpltextnumero{} 339109.} \and Viresh Patel\footnote{Korteweg de Vries Institute for Mathematics, University of Amsterdam. Email: \texttt{vpatel@uva.com}. Supported by the Netherlands Organisation for Scientific Research (NWO) through the Gravitation Programme Networks (024.002.003).} \and Guus Regts\footnote{Korteweg de Vries Institute for Mathematics, University of Amsterdam. Email: \texttt{guusregts@gmail.com}. Supported by a personal NWO Veni grant.}}
\begin{document}
\maketitle

\abstract{%
For a graph $G=(V,E)$, $k\in \mathbb{N}$, and complex numbers $w=(w_e)_{e\in E}$ the partition function of the multivariate Potts model is defined as
\[
\bZ(G;k,w):=\sum_{\phi:V\to [k]}\prod_{\substack{e=uv\in E \\ \phi(u)=\phi(v)}}\var_e,
\]
where $[k]:=\{1,\ldots,k\}$.
In this paper we give zero-free regions for the partition function of the anti-ferromagnetic Potts model on bounded degree graphs. 
In particular we show that for any $\Delta\in \mathbb{N}$ and any $k\geq e \Delta+1$,  there exists an open set $U$ in the complex plane that contains the interval $[0,1)$ such that $\bZ(G;k,w)\neq 0$ for any graph $G=(V,E)$ of maximum degree at most $\Delta$ and any $w\in U^E$. (Here $e$ denotes the base of the natural logarithm.) For small values of $\Delta$ we are able to give better results.

As an application of our results we obtain improved bounds on $k$ for the existence of deterministic approximation algorithms for counting the number of proper $k$-colourings of graphs of small maximum degree.
\\[\baselineskip]
\noindent\textbf{Keywords.} anti-ferromagnetic Potts model, counting proper colourings, partition function, approximation algorithm, complex zeros}

\section{Introduction}
The Potts model is an important object in statistical physics generalising the Ising model for magnetism. The partition function of the Potts model captures much information about the model and its study connects several different areas including statistical physics, probability theory, combinatorics and theoretical computer science.

Every graph $G$ (throughout the paper we will always assume graphs are simple) has an associated Potts model partition function defined as follows. Fix $k \in \mathbb{N}$, which will be the number of states or colours. We will consider all functions $\phi: V\to [k]:=\{1,\ldots,k\}$ and often refer to $\phi(v)$ as the colour of $v$. 
For our given graph $G=(V,E)$, we associate a variable $\var_e\in \mathbb{C}$ to each edge $e\in E$.
The $k$-state \emph{partition function of the Potts model} for $G$ is a polynomial in the variables $(\var_e)_{e\in E}$ given by
\[
\bZ(G;k,(\var_e)_{e\in E}):=\sum_{\phi:V\to [k]}\prod_{\substack{uv\in E \\ \phi(u)=\phi(v)}}\var_{uv}.
\]
 If $k$ and the $\var_e$ are clear from the context we simply write $\bZ(G)$. One often considers the `univariate' special case when all $\var_e$ are equal to some $\var\in \mathbb{C}$, in which case we write $\bZ(G;k,w)$ for the partition function.
We note that in statistical physics one parametrises $\var_e=e^{\beta J_e}$ with $\beta$ the inverse temperature and $J_e$ the coupling constant.
The model is called \emph{anti-ferromagnetic} if $w_e\in (0,1)$ (i.e.\ $J_e<0$) for each $e \in E$ and \emph{ferromagnetic} if $w_e>1$ (i.e.\ $J_e>0$) for each $e \in E$.

The study of the location of the complex zeros of the partition function is tradditionally motivated by a seminal result of Lee and Yang~\cite{LY52}, roughly saying that absence of complex zeros near a point on the real axis implies that the model does not undergo a phase transition at this point. 
We should mention here that the zeros we consider are in terms of the variables $(w_e)_{e\in E}$ and  are known as `Fisher zeros'~\cite{Fi65} in the literature, as opposed to `Lee-Yang zeros'~\cite{LY52b}, which are zeros in terms of a different variable representing an external field. 
Another motivation is the algorithmic computation of partition functions which has recently been linked to the location of the complex zeros~\cite{Ba16,PR17}. We discuss this theme in more detail after stating our main result: a new zero-free region for the multivariate anti-ferromagnetic Potts model, which will be proved in Section~\ref{sec:condition}. 

\begin{theorem}\label{thm:interval}
For each $\Delta\in \mathbb{N}$ there exists a constant $c_\Delta \leq e$ and an open set $U\subset\C$ containing the real interval $[0,1)$ such that the following holds. For all graphs $G$ of maximum degree at most $\Delta$, all integers $k\ge k_\Delta: = \lceil c_\Delta \cdot \Delta + 1\rceil$, and for all $(\var_e)_{e\in E}$ such that $w_e \in U$ for each $e\in E$, we have \[\bZ(G; k, (\var_e)_{e\in E})\ne 0.\] 
See Table~\ref{tab:bounds} below for better bounds on $c_\Delta$ and $k_\Delta$ for small values of $\Delta$.
\end{theorem}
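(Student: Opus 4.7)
The plan is to establish a sufficient condition for the non-vanishing of $\bZ(G;k,(\var_e))$ via an inductive argument on the structure of $G$, and then verify that this condition holds whenever $k\ge c_\Delta\Delta+1$ and every edge weight lies in a suitable open complex neighbourhood $U$ of $[0,1)$. I would track auxiliary rational quantities---ratios of constrained partition functions---and show that they remain in a carefully chosen invariant region $K\subset\C$ throughout the induction; this region will be designed so that non-vanishing of the partition function is forced whenever the associated ratios lie in $K$.

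\paragraph{Setting up the induction.} For a graph $G=(V,E)$, vertex $v\in V$ and colour $i\in[k]$, let $\bZ^{v,i}(G)$ denote the partition function restricted to colourings $\phi$ with $\phi(v)=i$, so $\bZ(G)=\sum_{i\in[k]}\bZ^{v,i}(G)$, and define the marginals $q_i^G(v):=\bZ^{v,i}(G)/\bZ(G)$. I would proceed by induction on the number of edges (or equivalently peel off vertices), proving simultaneously that $\bZ(G)\ne 0$ and that the vector of marginals at every vertex lies in $K$. When an edge $uv$ is added one has the identity
\[
\bZ(G) \;=\; \bZ(G-uv)\Bigl[1+(\var_{uv}-1)\sum_{i\in[k]}q_i^{G-uv}(u,v)\Bigr],
\]
where $q_i^{G-uv}(u,v)$ is the joint probability that $u$ and $v$ both receive colour $i$ in $G-uv$; analogous rational recursions govern the one-point marginals after deleting a pendant vertex. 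The task is to choose $K$ so that the bracketed factor is nonzero and, moreover, the updated marginals still lie in $K$ whenever the old ones do and $\var_{uv}\in U$; this propagates both the non-vanishing of $\bZ$ and the invariant through the induction.

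\paragraph{Where $c_\Delta\le e$ comes from, and the main obstacle.} The bound on $c_\Delta$ should drop out of the invariance requirement itself: at a vertex of degree at most $\Delta$ the recursion for the marginals takes the form of a product of up to $\Delta$ factors, each a small perturbation of its value at $\var=1$, and bounding the accumulated perturbation via estimates of the form $(1+c/\Delta)^\Delta\le e^c$ gives the universal constant $e$. For small $\Delta$ the shape of $K$ can be optimised more carefully, producing the improved values in Table~\ref{tab:bounds}. For $\var\in[0,1)$ real and $k$ strictly above the threshold $c_\Delta\Delta+1$, the invariance condition should hold with strict inequality and the marginals sit in the interior of $K$; the open complex neighbourhood $U$ of $[0,1)$ is then obtained by a continuity/compactness argument applied at each point of $[0,1)$. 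The hard part will be pinning down the region $K$: it must be large enough to absorb the deformation of $\var$ off the real axis, yet small enough---and appropriately shaped, likely something like a disk or a half-plane around $1/k$---for a single inductive step to close. The precise choice of $K$, and the resulting estimate of how much perturbation it tolerates, is what ultimately controls the quality of $c_\Delta$.
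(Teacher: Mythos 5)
Your proposal is in the same general spirit as the paper's proof---both are inductions that keep certain ratios of restricted partition functions inside a carefully chosen region, and the constant $e$ ultimately does come from an estimate of the shape $(1+\tfrac{1}{\Delta-1})^{\Delta-1}<e$---but there are two substantive gaps.

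First, the objects you track and the recursion you propose are different from what the paper uses, in a way that matters. You want to propagate one-point marginals $q_i^G(v)=\bZ^{v,i}(G)/\bZ(G)$ and joint two-point marginals via an edge-deletion identity. The joint probability $q_i^{G-uv}(u,v)$ does not factor and does not obey a simple one-step recursion on general graphs, so "the updated marginals still lie in $K$" is not a local condition you can verify at a single step. The paper sidesteps this entirely by inducting on the number of \emph{free} vertices and only ever fixing a leaf-independent set of vertices: this reduces the key step to comparing $\ZZ{W',u}{L',\ell}(G)$ with $\ZZ{W',u}{L',\ell'}(G)$ where $u$ is a pendant, which is controllable. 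The invariant is also different: rather than marginals, the paper controls the pairwise ratio $\ZZ{W',u}{L',\ell}(G)/\ZZ{W',u}{L',\ell'}(G)$ together with the angle between the two vectors, and these two pieces of information feed back into each other in the induction. Proving convexity of $B(1,K)^d$ (via Grace--Szeg\H{o}--Walsh) is what lets the contributions of the good colours be summed without cancellation.

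Second, and more critically, your proposal does not engage with the real obstruction this paper overcomes, namely the behaviour as $\var_e\to 0$. Barvinok's method requires $\var_e$ near $1$; when $\var_e$ can be near $0$, a single restricted partition function $\ZZ{W,u}{L,\ell}(G)$ can be smaller than its siblings by a factor of order $\eps$, so a bound on the angle alone, or a bound of the form "ratio in a disk $B(1,K)$", simply cannot hold uniformly over all colours $\ell$. The paper's key new idea is the classification of colours as good, neutral, or bad at each vertex (according to whether a fixed neighbour carries that colour and whether the incident edge weight is small), together with separate estimates: good/neutral colours are controlled by the angle-plus-ratio invariant, while bad colours are bounded away using $|\var_e|\le\eps$. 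Your proposal has no analogue of this split, and without it the inductive step fails precisely in the regime $\var_e$ close to $0$, which includes the chromatic-polynomial endpoint $\var=0$ that is the main algorithmic payoff. Pinning down $K$ is not the only missing piece; you also need a mechanism for quarantining the bad colours, and that is where the bulk of the paper's work lies.
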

\begin{rem}\label{rem:closed interval 1}
We can in fact guarantee an open set $U$ containing the \emph{closed} interval $[0,1]$ under the same conditions as in the theorem above. It is however more convenient to work with $[0,1)$.
In Remark~\ref{rem:closed interval} we indicate how to extend our results to the closed interval.  

We moreover note that while we work with simple graphs in the paper, our result also holds for graphs with multiple edges (loops are not allowed). Our proof of Theorem~\ref{thm:interval} only requires a tiny change to accommodate for this. We leave this for the reader.
\end{rem}

\begin{rem}
From a statistical physics perspective, our results show that the anti-ferromagnetic Potts model does not undergo a phase transition at any temperature on any (infinite) lattice of maximum degree $\Delta$ as long as $k\geq e\Delta+1$. It is interesting to note that phase transitions \emph{do} occur at a finite temperature when $k$ is sublinear in $\Delta$: $k\sim\frac{\Delta}{\log\Delta}$. See~\cite{Huangetal}.
\end{rem}

\begin{rem}
We note that recently (after posting a version of the present paper to the arXiv), Liu, Sinclair and Srivastava~\cite{LSS19} proved an improved version of Theorem~\ref{thm:interval} for the univariate case. They managed to bring down $k_\Delta$ to $2\Delta$.
\end{rem}

% \begin{table}[ht]
% \begin{center}
% \begin{tabular}{c|c|c|c|c|c|c|c|c|c|c}
% $\Delta$ & 3 & 4 & 5 & 6 & 7 & 8 & 9 & 10 & 11 & 12\\
% \hline
% $c_{\Delta}$ & 1.485 & 1.749 & 1.939 & 2.081 & 2.193 & 2.283 & 2.357 & 2.419 & 2.472 & 2.517 \\
% $k_\Delta$ & 6 & 8 & 11 & 14 & 17 & 20 & 23 & 26 & 29 & 32
% \end{tabular}
% \end{center}
% \caption{Upper bounds on $c_\Delta$ and the resulting bounds on $k_\Delta$ for small $\Delta$, obtained from Theorem~\ref{thm:ballimprove}.\label{tab:bounds}}
% \end{table}
\begin{table}[ht]
	\caption{Upper bounds on $c_\Delta$ and the resulting bounds on $k_\Delta$ for small $\Delta$, obtained from Theorem~\ref{thm:ballimprove}.\label{tab:bounds}}
	\begin{center}%
	\begin{tabular}{ccccccccccc}
	\toprule
	$\Delta$ & 3 & 4 & 5 & 6 & 7 & 8 & 9 & 10 & 11 & 12\\
	\midrule
	$c_{\Delta}$ & 1.485 & 1.749 & 1.939 & 2.081 & 2.193 & 2.283 & 2.357 & 2.419 & 2.472 & 2.517 \\
	$k_\Delta$ & 6 & 8 & 11 & 14 & 17 & 20 & 23 & 26 & 29 & 32 \\
	\bottomrule
	\end{tabular}
	\end{center}
\end{table}

\subsection{Related work}
There are several results concerning zero-free regions of the partition function of the Potts model, some of which we discuss below. See e.g.~\cite{Shrocketal,Shrock,Chang1,Chang2,Chang3} for results on the location of the (Fisher) zeros of the partition function of the anti-ferromagnetic Potts model on several lattices, and \cite{So01,FP08,JPS13,Ba16} for results on general (bounded degree) graphs.
Let us say a few words on the latter results and connect these to our present work.

The partition function of the Potts model is a special case of the random cluster model of Fortuin and Kasteleyn~\cite{FK72} which, for a graph $G=(V,E)$ and variables $q$ and $(v_e)_{e\in E}$, is given by
\[
Z(G;q,(v_e)_{e\in E}):=\sum_{F\subseteq E} q^{k(F)} \prod_{e\in F}v_e,
\] 
where $k(F)$ denotes the number of components of the graph $(V,F)$.
Indeed, taking $q=k$ and $v_e=w_e-1$ for each edge $e$, it turns out that $Z(G;q,(v_e)_{e\in E})=\bZ(G;k,(\var_e)_{e\in E})$; see~\cite{So05} for more details and for the connection with the Tutte polynomial.

Almost twenty years ago Sokal~\cite{So01} proved that for any graph $G$ of maximum degree $\Delta\in \mathbb{N}$ there exists a constant $C\leq 7.964$ such that if $|1+v_e|\leq 1$ for each edge $e$, then for any $q\in \mathbb{C}$ such that $|q|\geq C\Delta$ one has $Z(G;q,(v_e)_{e\in E})\neq 0$.
The bound on the constant $C$ was improved to $C\leq  6.907$ by Procacci and Fern\'andez~\cite{FP08}. See also~\cite{JPS13} for results when the condition $|1+v_e|\leq 1$ is removed. In our setting, Sokal's result implies that $\bZ(G; k,(\var_e)_{e\in E})\ne 0$ for  any integer $k > C\Delta$ when every $\var_e$ lies in the unit disk. 

Our main result may be seen as an improvement upon the constant $C$, though in a more restricted setting where, instead of demanding that $\bZ(G; k, (\var_e)_{e\in E})$ is nonzero in the unit disk, we demand that $\bZ(G; k, (\var_e)_{e\in E})$ is nonzero in an open region containing $[0,1)$.
Interestingly, our method of proof is completely different from the approach in \cite{So01,FP08,JPS13}, which is based on cluster expansion techniques from statistical physics.
We prove our results by induction using some basic facts from geometry and convexity, building on an approach developed by Barvinok~\cite{Ba16}.
Previously, Barvinok used this approach in~\cite[Theorem 7.1.4]{Ba16} (improving on~\cite{BaS17}) to show that for each positive integer $\Delta$ there exists a constant $\delta_\Delta>0$ (one may choose e.g.\ $\delta_3=0.18$, $\delta_4=0.13$, and in general $\delta_\Delta=\Omega(1/\Delta)$) such that for any positive integer $k$ and any graph $G$ of maximum degree at most $\Delta$ one has 
\begin{equation}\label{eq:Barvinok}
\bZ(G;k,(w_e)_{e\in E})\neq 0 \text{ provided } |1-w_e|\leq \delta_\Delta \text{ for each edge } e.
\end{equation}
In fact this result is proved in much greater generality, but we have  stated it here just for the Potts model.

While the approach in \cite{Ba16} seems crucially to require that $w_e$ is close to $1$, here we present ideas that allow us to extend the approach in a way that bypasses this requirement. As such the approach may be applicable to other types of models.

\subsection{Algorithmic applications}

Barvinok~\cite{Ba16} recently developed an approach to design efficient approximation algorithms based on absence of complex zeros in certain domains. This gives an additional motivation for studying the location of complex zeros of partition functions.
While it is typically \#P-hard to compute the partition function of the Potts model exactly one may hope to find efficient approximation algorithms (although for certain choices of parameters it is known to be NP-hard to approximate the partition function of the Potts model~\cite{GSV14}).

Combining Theorem~\ref{thm:interval} with Barvinok's approach and results from \cite{PR17}, we obtain the following corollary. We discuss how the corollary is obtained at the end of this section.

\begin{cor}\label{cor:algorithm}
Let $\Delta\in \mathbb{N}$, $w\in [0,1]$ and let $k\ge c_{\Delta} \cdot \Delta+1$.
Then there exists a deterministic algorithm which given an $n$-vertex graph of maximum degree at most $\Delta$ computes a number $\xi$ satisfying  
\[e^{-\eps}\leq \frac{\bZ(G;k,w)}{\xi} \leq e^{\eps}
\]
in time polynomial in $n/\eps$.
\end{cor}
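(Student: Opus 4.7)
The plan is to combine Theorem~\ref{thm:interval} (with Remark~\ref{rem:closed interval 1}) with the polynomial interpolation method of Barvinok~\cite{Ba16}, together with the refinement due to Patel and Regts~\cite{PR17} which provides polynomial-time computation of the relevant Taylor coefficients on bounded-degree graphs. View $Z(z) := \bZ(G;k,z)$ as a univariate polynomial in $z$ of degree at most $m = |E(G)| \le \Delta n / 2$. At $z = 1$ we have $Z(1) = k^n$; if the target $w = 1$ the problem is trivial, so assume $w \in [0,1)$. By Theorem~\ref{thm:interval} together with Remark~\ref{rem:closed interval 1} there is an open set $U \subset \C$ containing the closed interval $[0,1]$ on which $Z$ does not vanish for any graph of maximum degree at most $\Delta$.

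Next, reparametrise by $q(t) := Z(1 + t(w-1))$, so that $q(0) = k^n$ is known exactly and $q(1) = \bZ(G;k,w)$ is the target. Since $U$ is open and contains the compact interval $[0,1]$, there is a simply connected open set $W \subset \C$ containing $[0,1]$ such that $1 + t(w-1) \in U$ for every $t \in W$; thus $q$ is zero-free on $W$. Consequently $\log q$ is analytic on $W$, and the standard Barvinok argument produces a polynomial $T_N$ of degree $N = O(\log(m/\eps))$, obtained by truncating the Taylor series of $\log q$ after a conformal change of variables from a disk onto $W$, satisfying $|\log q(1) - T_N(1)| \le \eps$. Exponentiating $T_N(1)$ yields the required $e^{\pm \eps}$-approximation.

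The coefficients of $T_N$ depend only on the first $N+1$ Taylor coefficients of $q$ at $t = 0$, equivalently on the first $N+1$ Taylor coefficients of $Z$ at $z = 1$, equivalently (via Newton's identities applied to the roots of $q$) on the power sums of the inverse roots of $q$ up to order $N$. The main algorithmic result of~\cite{PR17} shows that for the Potts partition function on a graph of maximum degree $\Delta$ these quantities can be computed in time $f(\Delta, N) \cdot n$, since each is expressible as a weighted sum over connected subgraphs of $G$ with at most $N$ edges, and such subgraphs can be enumerated efficiently. Choosing $N = O(\log(n/\eps))$ then gives total running time polynomial in $n/\eps$.

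The principal technical point is the geometric construction of $W$ and the associated conformal map, needed to turn the qualitative zero-freeness of $U$ into a quantitative, $n$-independent convergence rate for the truncated Taylor series; this must be done uniformly in $w \in [0,1)$ so that the hidden constants depend only on $\Delta$ and $k$. Having Remark~\ref{rem:closed interval 1} available, so that $U$ contains a genuine open neighbourhood of the \emph{closed} interval $[0,1]$, is what makes this uniformity possible. The remainder is bookkeeping to match our setup to the precise hypotheses of~\cite{PR17}.
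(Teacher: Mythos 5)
Your proposal is correct and follows essentially the same route as the paper's proof sketch: reparametrise via $q(t)=\bZ(G;k,1+t(w-1))$, invoke Theorem~\ref{thm:interval} (with Remark~\ref{rem:closed interval 1}) for a zero-free neighbourhood of $[0,1]$ uniform in $w$, apply Barvinok's disk-to-region map and truncated Taylor expansion of $\log q$, and compute the needed low-order coefficients via \cite{PR17} and Newton's identities. The only cosmetic difference is that the paper uses a fixed polynomial $p$ mapping a slightly-larger-than-unit disk into the zero-free region rather than a conformal bijection, but this is the same device and your outline is sound.
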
 
Corollary~\ref{cor:algorithm} gives us a fully polynomial time approximation scheme (FPTAS) for computing the partition function of the anti-ferromagnetic Potts model (for the right choice of parameters).
In the case when $w=0$, $\bZ(G;k,w)$ is the number of proper $k$-colourings of $G$ and so the corollary gives an FPTAS for computing the number of proper $k$ colourings when 
$k\ge k^{\min}_{\Delta}> c_{\Delta} \cdot \Delta+1$. Lu and Yin~\cite{LY13} gave an FPTAS for this problem when $k \geq 2.58 \Delta+1$; we improve their bound for $\Delta=3,\ldots, 11$. 
%NOTE Before we claimed an improvement on LY13 for $\Delta = 3, \ldots, 11$. 
%NOTE 2 since e>2.58 we only improve their bound for Delta=3,..,11
We remark that for $\Delta=3$ there is in fact an FPTAS for counting the number of $4$-colourings~\cite{LYZZ17}.
Moreover, there exists an efficient randomised algorithm due to Vigoda~\cite{V00}, which is based on Markov chain Monte Carlo methods, that only requires $k> (11/6) \Delta$ colours. See~\cite{Guillem et.al.} for a very recent small improvement on the constant $11/6$.

\begin{proof}[Proof sketch of Corollary~\ref{cor:algorithm}]
We first sketch Barvinok's algorithmic approach applied to the partition function of the Potts model from which Corollary~\ref{cor:algorithm} is derived.
Suppose we wish to evaluate $\bZ(G;k,w)$ at some point $w\in [0,1)$ for some graph $G$ of maximum degree at most $\Delta$ and positive integer $k\geq c_\Delta \cdot \Delta$.
The first step is to define a univariate polynomial $q(z):=\bZ(G;k,1+z(w-1))$. We then wish to compute $q(1)$.

By Theorem~\ref{thm:interval} combined with \eqref{eq:Barvinok} (cf. Remark~\ref{rem:closed interval 1}) there exists an open region $U'$ that contains $[0,1]$ on which $q$ does not vanish. 
Then we take a disk $D$ of radius slightly larger than $1$ and a fixed polynomial $p$ such that $p(0)=0$ and $p(1)=1$ and such that $D$ is mapped into $U'$ by $p$; see \cite[Section 2.2]{Ba16} for details. 
We next define another polynomial $f$ on $D$ by $f(z)=q(p(z))$. Then $f$ does not vanish on $D$ and hence $\log(f(z))$ is analytic on $D$ and has a convergent Taylor series. 
To approximate $f(1)= \bZ(G;k,w)$ we truncate the Taylor series of $\log(f(z))$ (see \cite[Lemma 2.2.1]{Ba16} for details on where exactly to truncate the Taylor series to get a good approximation), and then we compute these Taylor coefficients.

To compute the Taylor coefficients of $\log(f(z))$ it turns out that it is suffices to compute the low order coefficients of the polynomial $q$, since these can be combined with the coefficients of the polynomial $p$ to obtain the low order coefficients of $f$, from which one can deduce the Taylor coefficients of $\log(f(z))$ via the Newton identities; see \cite[Section 2]{PR17}.
By Theorem 3.2 from~\cite{PR17} the low order coefficients of $q$ can be computed in polynomial time, since, up to an easy to compute multiplicative constant, $q$ is a \emph{bounded induced graph counting polynomial} (\cite[Definition 3.1]{PR17}), as is proved in greater generality in \cite[Section 6]{PR17}. 
\end{proof}

\subsection*{Organisation of the paper}
In the next section we set up some notation and discuss some preliminaries that we need in the proof of our main theorem.
This proof is inspired by Barvinok's proof of \eqref{eq:Barvinok}, and has a similar flavour. 
It is based on induction with a somewhat lengthy and technical induction hypothesis.
For this reason we give a brief sketch of our approach in the next section. 
Section~\ref{sec:ball} then contains an induction for Theorem~\ref{thm:interval}.
This induction contain a condition that is checked in Section~\ref{sec:condition}. 
The proof of Theorem~\ref{thm:interval} follows upon combining the results of Sections~\ref{sec:ball} and~\ref{sec:condition}; see the remark after the statement of Proposition~\ref{pr:constants}. 
In Section~\ref{sec:smallDelta} we slightly modify our induction hypotheses and add another condition to it that allows us to improve our bounds for small values of $\Delta$, as documented in Table~\ref{tab:bounds}. 
We note that this section can be read independently from Sections~\ref{sec:ball} and~\ref{sec:condition}.
We close with some concluding remarks in Section~\ref{sec:conclusion}.

\section{Preliminaries, notation and main idea of the proofs}

In order to prove our results, we will need to work more generally with the partition function of the Potts model with boundary conditions.
For a list $W=w_1\ldots w_m$ of distinct vertices of $V$ and a list $L=\ell_1\ldots\ell_m$ of pre-assigned colours in $[k]$ for the vertices in $W$ the \emph{restricted partition function} $\ZZ{W}{L}(G)$ is defined by
\[
\ZZ{W}{L}(G):=\sum_{\substack{\phi:V\to [k]\\\phi \text{ respects }(W,L)}}\prod_{\substack{uv\in E \\ \phi(u)=\phi(v)}}\var_{uv},
\]
where we say that $\phi$ respects $(W,L)$ if for all $i=1\ldots, m$ we have $\phi(w_i)=\ell_i$.
We call the vertices $w_1,\ldots,w_m$ \emph{fixed} and refer to the remaining vertices in $V$ as \emph{free} vertices. The length of $W$ (resp.\ $L$), written $|W|$ (resp.\ $|L|$) is the length of the list.
Given a list of distinct vertices $W' = w_1\ldots w_m$, and a vertex $u$ (distinct from $w_1, \ldots, w_m$) we write $W = W'u$ for the concatenated list $W = w_1\ldots w_m u$ and we use similar notation $L'\ell$ for concatenation of lists of colours.  We write $\deg(v)$ for the degree of a vertex $v$ and we write $G\setminus uv$ ($G-u$) for the graph obtained from $G$ by removing the edge $uv$ (by removing the vertex $u$). 

In our proofs we often view the restricted partition functions $\bZ^W_L(G)$ as vectors in $\C\simeq\R^2$. The following lemma of Barvinok turns out to be very convenient.
\begin{lemma}[{Barvinok~\cite[Lemma 3.6.3]{Ba16}}]\label{lem:vectorsum}
Let $u_1,\dotsc,u_k\in\R^2$ be non-zero vectors such that the angle between any vectors $u_i$ and $u_j$ is at most $\alpha$ for some $\alpha\in[0,2\pi/3)$. Then the $u_i$ all lie in a cone of angle at most $\alpha$ and 
\[
\Big|\sum_{j=1}^ku_j\Big| \ge \cos(\alpha/2)\sum_{j=1}^k|u_j|\,.
\]
\end{lemma}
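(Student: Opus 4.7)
The plan is to prove the two assertions separately: first that the $u_j$ all lie in a common cone of angle at most $\alpha$, and then to use this geometric fact to deduce the inequality by projecting onto the cone's bisector.

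For the cone claim I pass to the unit vectors $\hat u_j := u_j/|u_j|$, identified with angles $\theta_j \in [0,2\pi)$. The cases $k\le 2$ are immediate, so assume $k\ge 3$. After relabelling, sort the angles so that the largest gap between consecutive points on the circle is the wrap-around from $\theta_k$ back to $\theta_1$, and write $\beta := \theta_k - \theta_1$ for the length of the resulting minimum enclosing arc, so that $g_{\max} = 2\pi - \beta$ is the largest gap. I want to show $\beta \le \alpha$. If $\beta \le \pi$, then the angular distance between the extreme points $\hat u_1$ and $\hat u_k$ equals $\beta$, and the hypothesis forces $\beta \le \alpha$. Otherwise $\beta > \pi$, and this same extreme pair has angular distance $2\pi - \beta$, so the hypothesis yields $\beta \ge 2\pi - \alpha > 4\pi/3$ (using $\alpha < 2\pi/3$).

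In the latter case I derive a contradiction. The cumulative distances $d_j := \theta_j - \theta_1$ grow monotonically from $d_1 = 0$ to $d_k = \beta$, and the pairwise hypothesis demands $\min(d_j, 2\pi - d_j) \le \alpha$, i.e.\ $d_j \in [0,\alpha] \cup [2\pi - \alpha, 2\pi)$. Since $d_1 = 0$ lies in $[0,\alpha]$ while $d_k \ge 2\pi - \alpha$ lies in $[2\pi - \alpha, 2\pi)$, some consecutive jump $d_{j+1} - d_j$ must exceed $(2\pi-\alpha) - \alpha = 2\pi - 2\alpha$. This jump is an internal gap of the sorted sequence, hence at most $g_{\max} = 2\pi - \beta$; combining $2\pi - 2\alpha < 2\pi - \beta$ with $\beta \ge 2\pi - \alpha$ gives $\alpha \ge 2\pi/3$, contradicting the hypothesis. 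Therefore $\beta \le \alpha$, and all $\hat u_j$ lie in a common arc of length at most $\alpha$.

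Given the cone, let $v$ be a unit vector along its bisector. Each $u_j$ makes angle at most $\alpha/2$ with $v$, so $\langle u_j, v\rangle \ge |u_j|\cos(\alpha/2)$. Summing and using $\bigl|\sum_j u_j\bigr| \ge \bigl\langle \sum_j u_j,\, v\bigr\rangle$ (valid since $v$ is a unit vector) gives
\[
\Big|\sum_{j=1}^k u_j\Big| \;\ge\; \sum_{j=1}^k \langle u_j, v\rangle \;\ge\; \cos(\alpha/2)\sum_{j=1}^k|u_j|.
\]
The main obstacle is the wrap-around case $\beta > \pi$: it is tempting to expect the enclosing arc bound to follow directly from the pairwise bound, but the hypothesis $\alpha < 2\pi/3$ is genuinely needed to rule out configurations spread over more than half the circle (for instance, three vectors at mutual angle just below $2\pi/3$), where the pairwise constraint can be satisfied while the enclosing arc is much larger than $\alpha$. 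The projection step is then routine.
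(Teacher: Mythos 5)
Your proof is correct and takes the standard (and, to my knowledge, Barvinok's) route: first show via a gap/wrap-around argument, which genuinely uses $\alpha<2\pi/3$, that all vectors lie in a cone of aperture at most $\alpha$, and then project onto the cone's bisector. The paper cites Barvinok's Lemma 3.6.3 without reproducing a proof, so there is no independent argument in the paper to compare against; your write-up is a careful and correct rendering of that lemma, with the wrap-around case handled properly (the only trivial imprecision is that the jump $d_{j+1}-d_j$ is at least, rather than strictly greater than, $2\pi-2\alpha$, which does not affect the contradiction).
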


Let us now try to explain our approach. It starts with Barvinok's approach from \cite[Section 7.2.3]{Ba16} tailored to the partition function of the Potts model.
Fix a vertex $v$ of the graph $G$. 
Then $\bZ(G)=\sum_{i=1}^k \ZZ{v}{i}(G)$. 
If we can prove that the pairwise angles between $\ZZ{v}{i}(G)$ and $\ZZ{v}{j}(G)$ for all $i,j\in [k]$ are bounded above by $2\pi/3$ then one can conclude by the Lemma~\ref{lem:vectorsum} that $\bZ(G)\neq 0$.
So the idea is to show (using induction on list size) that for any list $W$ of distinct vertices of $G$ and $L$ of pre-assigned colours from $[k]$ where $|W|=|L|$ we have for any vertex $v\notin W$ that the pairwise angles between $\ZZ{W,v}{L,i}(G)$ and $\ZZ{W,v}{L,j}(G)$ are bounded by some $\alpha<2\pi/3$. 

To obtain information about $\ZZ{W,v}{L,i}(G)$, the next step is to fix the neighbours of $v$ and apply a suitably chosen induction hypothesis to all of these neighbours combined with some kind of telescoping argument. 
Suppose for the moment that the degree of $v$ is $1$, and let $u$ be the unique neighbour of $v$.
Then 
\begin{equation}\label{eq:expansionB}
\ZZ{W,v}{L,j}(G)=\sum_{i=1}^k\ZZ{W,v,u}{L,j,i}(G)=\sum_{i\neq j} \ZZ{W,v,u}{L,j,i}(G\setminus uv)+w_{uv}\ZZ{W,v,u}{L,j,j}(G\setminus uv).
\end{equation}
To compare $\ZZ{W,v}{L,j}(G)$ with $\ZZ{W,v}{L,j'}(G)$, Barvinok shows that if $w_{uv}$ is sufficiently close to $1$, then their angle is not too big (if $w_{uv} = 1$ then they are equal) and then the induction can continue.

We however allow $w_{uv}$ to be arbitrarily close to zero, so we need an additional idea: in the induction hypothesis, besides the condition that the angle between two vectors $\ZZ{W,v,u}{L,j,i}(G)$ and $\ZZ{W,v,u}{L,j,i'}(G)$ is small, we add the condition that their lengths should not be too far apart. 
This leads to complications, but fortunately they can be overcome with some additional ideas.
We refer to the next section for the induction statement and the details of the proofs. 
We next collect some tools that we will use.

\quad \\
We will need the following simple geometric facts, which follow from the sine law and cosine law for triangles.
\begin{proposition}\label{prop:simplegeom}
Let $u$ and $u'$ be non-zero vectors in $\R^2$. 
\begin{enumerate}[\textup{(\roman*)}]
\item\label{vectorlength}
If the angle between $u$ and $u'$ is at most $\pi/3$, then $|u-u'|\le \max\{|u|,|u'|\}$.
\item\label{vectorangle}
The angle $\gamma$ between $u$ and $u'$ satisfies $\sin\gamma\le |u-u'|/|u'|$.
\end{enumerate}
\end{proposition}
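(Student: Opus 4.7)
The plan is to prove both parts by setting up the triangle with vertices $0$, $u$, and $u'$ in $\R^2$; the side opposite the angle $\gamma$ (at the vertex $0$) has length $|u-u'|$, and the two sides adjacent to $\gamma$ have lengths $|u|$ and $|u'|$. Both assertions then follow from standard trigonometric identities in this triangle.

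For part (\textit{i}), I would apply the law of cosines to obtain
\[
|u-u'|^2 = |u|^2 + |u'|^2 - 2|u||u'|\cos\gamma,
\]
where $\gamma \in [0,\pi/3]$ denotes the angle between $u$ and $u'$. Without loss of generality assume $|u|\le |u'|$, so that $\max\{|u|,|u'|\}=|u'|$. The inequality $|u-u'|\le |u'|$ is then equivalent to $|u|^2 \le 2|u||u'|\cos\gamma$, i.e.\ $|u|\le 2|u'|\cos\gamma$. Since $\gamma\le \pi/3$ gives $\cos\gamma \ge 1/2$, we have $2|u'|\cos\gamma \ge |u'|\ge |u|$, as required.

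For part (\textit{ii}), I would apply the law of sines to the same triangle. Let $\beta$ denote the interior angle of the triangle at the vertex $u$, which is opposite the side of length $|u'|$. The law of sines gives
\[
\frac{\sin\gamma}{|u-u'|} = \frac{\sin\beta}{|u'|},
\]
so $\sin\gamma = |u-u'|\sin\beta/|u'| \le |u-u'|/|u'|$ because $\sin\beta\le 1$.

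There is really no substantive obstacle here: both statements are immediate from elementary planar trigonometry, and the only small points worth being careful about are the WLOG reduction in (\textit{i}) (which uses only the symmetry $|u-u'|=|u'-u|$ between the roles of $u$ and $u'$) and the implicit use of $\cos\gamma \ge 1/2$ on the full range $\gamma\in[0,\pi/3]$. The proposition is stated as a convenience for later arguments, so brevity is appropriate.
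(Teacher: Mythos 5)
Your proof is correct and follows exactly the route the paper indicates (it states only that the claims ``follow from the sine law and cosine law for triangles'' without giving details). Both your cosine-law argument for (i), using $\cos\gamma\ge 1/2$ on $[0,\pi/3]$, and your sine-law argument for (ii), bounding $\sin\beta\le 1$, are the standard and intended proofs.
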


For $r>0$ and $a\in \mathbb{C}$ we denote by $B(a,r) \subseteq \mathbb{C}$ the open disk of radius $r$ centered at $a$.
For $d\in \mathbb{N}$ we denote 
\[
B(a,r)^d:=\{b_1\dots b_d\mid b_i\in B(a,r) \text{ for } i=1,\ldots, d\} \subseteq \mathbb{C}.
\]

We will need the Grace--Szeg\H{o}--Walsh coincidence theorem, which we state here just for disks. Recall that a polynomial $p$ in variables $x_1,\ldots,x_d$ is called \emph{multi-affine} if for each variable its degree in $p$ at most one. 
\begin{lemma}[Grace--Szeg\H{o}--Walsh]\label{lem:GSW}
Let $p$ be a multi-affine polynomial in the variables $x_1,\ldots,x_d$. Suppose that $p$ is symmetric under permuting the variables. Then for any disk $B\subset \mathbb{C}$, if $\zeta_1,\ldots,\zeta_d\in B$, then there exists $\zeta\in B$ such that
\[
p(\zeta,\ldots,\zeta)=p(\zeta_1,\ldots,\zeta_d).
\]
\end{lemma}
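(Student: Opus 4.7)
The plan is to deduce this from Grace's classical apolarity theorem: if two polynomials $f, g \in \C[z]$ of the same degree $n$ are apolar, in the sense that $\sum_{k=0}^n (-1)^k \binom{n}{k} f_k g_{n-k} = 0$ where $f(z) = \sum_k \binom{n}{k} f_k z^k$ and $g(z) = \sum_k \binom{n}{k} g_k z^k$, and if $f$ has all its zeros in a closed disk $\overline{B}$, then $g$ has at least one zero in $\overline{B}$.

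Here is the reduction. Because $p$ is symmetric and multi-affine, a direct coefficient argument produces constants $c_0, \ldots, c_d \in \C$ with $p(x_1, \ldots, x_d) = \sum_{k=0}^d c_k e_k(x_1, \ldots, x_d)$, where $e_k$ denotes the $k$th elementary symmetric polynomial. Set $v := p(\zeta_1, \ldots, \zeta_d)$ and introduce
\[
g(z) := p(z, \ldots, z) - v, \qquad h(z) := \prod_{i=1}^d (z - \zeta_i).
\]
Finding the desired $\zeta \in B$ with $p(\zeta, \ldots, \zeta) = v$ is exactly finding a zero of $g$ in $B$, while $h$ already has all zeros $\zeta_1, \ldots, \zeta_d$ in $B$. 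So it suffices to verify that $g$ and $h$ are apolar, after which Grace's theorem supplies the required zero.

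Verifying apolarity is a direct computation. Using $e_k(z, \ldots, z) = \binom{d}{k} z^k$ one has $g(z) = -v + \sum_{k=0}^d c_k \binom{d}{k} z^k$, so in the basis $\binom{d}{k} z^k$ its coefficients are $\tilde c_0 = c_0 - v$ and $\tilde c_k = c_k$ for $k \geq 1$. Vieta's formulas give $h(z) = \sum_k \binom{d}{k} b_k z^k$ with $b_{d-k} = (-1)^k e_k(\zeta_1, \ldots, \zeta_d)/\binom{d}{k}$. The binomial factors and signs then cancel in the apolarity pairing, giving
\[
\sum_{k=0}^d (-1)^k \binom{d}{k} \tilde c_k b_{d-k} = \sum_{k=0}^d \tilde c_k e_k(\zeta_1, \ldots, \zeta_d) = -v + p(\zeta_1, \ldots, \zeta_d) = 0.
\]

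The main technical subtlety I expect is the degree condition in Grace's theorem, which requires both polynomials to have degree exactly $d$, whereas $\deg g$ may drop if $c_d = 0$. I would handle this by a perturbation argument: replace $p$ by $p + \eps \prod_{i=1}^d x_i$, apply the theorem to obtain some $\zeta_\eps \in \overline{B}$, and pass to the limit along a convergent subsequence as $\eps \to 0$, using compactness of $\overline{B}$. For the open-disk version of the lemma, I would first restrict to a slightly smaller closed disk containing all the $\zeta_i$ and apply the closed-disk case there.
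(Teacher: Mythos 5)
The paper does not prove this lemma; it simply cites \cite[Theorem~3.4.1b]{RS02} and moves on, so there is no in-paper proof to compare against. Your derivation from Grace's apolarity theorem is essentially the standard textbook route to Walsh's coincidence theorem, and it is correct: the decomposition $p=\sum_{k}c_k e_k$ (valid since the symmetric multi-affine polynomials in $d$ variables are exactly the span of $e_0,\dots,e_d$), the binomial-basis bookkeeping for $g$ and $h$ via Vieta, and the cancellation in the apolarity sum giving $\sum_k\tilde c_k e_k(\zeta_1,\dots,\zeta_d)=p(\zeta_1,\dots,\zeta_d)-v=0$ all check out. You also correctly flag the two real subtleties: the degree of $g$ can drop when $c_d=0$, which your perturbation $p\mapsto p+\eps\prod_i x_i$ (choosing $\eps>0$ small enough that $c_d+\eps\ne0$) followed by compactness of the closed disk handles, and the open-versus-closed disk mismatch, which is resolved by first shrinking to a closed sub-disk that still contains the finitely many $\zeta_i$. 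One small point worth making explicit in the limiting step: you should also replace $v$ by $v_\eps=p_\eps(\zeta_1,\dots,\zeta_d)$, so that $p_\eps(\zeta_\eps,\dots,\zeta_\eps)=v_\eps$ passes, along a convergent subsequence $\zeta_\eps\to\zeta$ and with $v_\eps\to v$, to $p(\zeta,\dots,\zeta)=v$. With that spelled out, the argument is complete.
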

We refer the reader to \cite[Theorem 3.4.1b]{RS02} for a proof of this result, background and related results.
Using the previous result we can show convexity of the set $B(1,r)^d \subseteq \mathbb{C}$ for certain choices of $r$ and $d$.
\begin{lemma}\label{lem:convex image}
Let $d\in \mathbb{N}$.
Then for any $0<r<1/d$ the set $B(1,r)^d$ is convex.
\end{lemma}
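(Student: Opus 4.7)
My plan is to reduce the convexity of $B(1,r)^d$ to a classical question in geometric function theory: the convexity of the image of a disk under a single-variable polynomial map.

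First I would apply Lemma~\ref{lem:GSW} to the multi-affine symmetric polynomial $p(x_1,\dotsc,x_d) = x_1\cdots x_d$ and the disk $B(1,r)$: for any $b_1,\dotsc,b_d \in B(1,r)$ there exists $\zeta \in B(1,r)$ with $\zeta^d = b_1\cdots b_d$. Since the reverse inclusion is trivial, this yields
\[
B(1,r)^d = \{\zeta^d : \zeta \in B(1,r)\},
\]
i.e.\ $B(1,r)^d$ is the image of $B(1,r)$ under the polynomial map $\zeta \mapsto \zeta^d$.

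Next I would parameterise $B(1,r) = \{1+rz : |z|<1\}$ and set $f(z) := (1+rz)^d$, so the task becomes showing $f(\{|z|<1\})$ is convex for $0<r<1/d$. For this I would invoke the classical Study criterion: a holomorphic function $f$ on the open unit disk with $f'$ nowhere vanishing is univalent with convex image provided $\mathrm{Re}\bigl(1 + zf''(z)/f'(z)\bigr) > 0$ on the disk. A direct calculation gives $1 + zf''(z)/f'(z) = (1+drz)/(1+rz)$, whose real part has the same sign as $1 + (d+1)r\,\mathrm{Re}(z) + dr^2|z|^2$. The minimum over the closed unit disk is attained at $z$ real and negative; writing $|z|=t$, the expression factors as $(1-rt)(1-drt)$, which is strictly positive on $t\in[0,1]$ precisely when $r < 1/d$ (given also $r<1$). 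This verifies Study's criterion and hence $B(1,r)^d$ is convex.

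The only non-routine ingredient is the Study criterion itself; if one wishes to avoid quoting it, an elementary alternative is to parameterise the boundary curve $\theta \mapsto (1+re^{i\theta})^d$ and check that its tangent direction rotates monotonically in $\theta$, which after a short computation reduces to the same inequality $1 + (d+1)r\cos\theta + dr^2 > 0$. The main conceptual step is really the reduction via Grace--Szeg\H{o}--Walsh, after which the problem becomes a completely standard one-variable computation.
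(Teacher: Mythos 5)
Your proposal is correct and follows essentially the same route as the paper: reduce via Grace--Szeg\H{o}--Walsh to the image of a disk under $z\mapsto(1+rz)^d$, then verify the classical convexity criterion $\Re\bigl(1+zf''(z)/f'(z)\bigr)>0$ (which the paper cites from Duren). The only cosmetic difference is how the criterion is checked: the paper bounds $|f''/f'|<1$ on the disk, while you compute the real part of $(1+drz)/(1+rz)$ directly and factor it as $(1-rt)(1-drt)$ at $z=-t$, both of which are correct and yield the same threshold $r<1/d$.
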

\begin{proof}
Define $f:\mathbb{C}\to \mathbb{C}$ by $z\mapsto (1+rz)^d$. Then, by Lemma~\ref{lem:GSW}, $B(1,r)^d$ is the image of $B(0,1)$ under $f$. 
We compute the ratio
\[
\frac{f^{\prime \prime}(z)}{f^\prime(z)}=r(d-1)\left(1+zr\right)^{-1}.
\]
The norm of this ratio is, for any $z\in B(0,1)$, strictly upper bounded by $1$, since $r<1/d$.
This implies that for all $z\in B(0,1)$,
\[
\Re\left(1+z\frac{f^{\prime \prime}(z)}{f^\prime(z)}\right)>0.
\]
A classical result cf.\ \cite[Section 2.5]{D83} now implies that the image of $B(0,1)$ under $f$ is a convex set.
\end{proof}

In Section~\ref{sec:smallDelta} we will also need the following geometric lemma, which we prove below.
\begin{lemma}\label{lem:geom}
 Let $u$ and $u'$ be non-zero vectors in $\R^2$ and $r\ge 1$ real number such that the angle between $u$ and $u'$ is at most $\phi<\pi/3$ and 
 \[r^{-1}\le \frac{|u|}{|u'|}\le r\\.\]
Then 
 \[
  |u-u'|\le \max\left\{2\sin(\phi/2), \sqrt{1+r^{-2}-2r^{-1}\cos\phi}\right\}\cdot \max\left\{|u|,|u'|\right\}.
 \]
\end{lemma}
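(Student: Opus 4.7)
The plan is to reduce to a one-parameter geometric inequality by applying the law of cosines, and then analyse the resulting quadratic in the length ratio. By symmetry in $u$ and $u'$, I would assume without loss of generality that $|u|\le |u'|$, so $\max\{|u|,|u'|\}=|u'|$ and $s:=|u|/|u'|\in[r^{-1},1]$. Letting $\theta\in[-\phi,\phi]$ denote the (signed) angle, the law of cosines gives
\[
|u-u'|^2 = |u'|^2 \bigl(s^2 + 1 - 2s\cos\theta\bigr).
\]
Since the right-hand side depends only on $|\theta|$, it suffices to prove that the function $g(s,\theta):=s^2+1-2s\cos\theta$ attains its maximum on the rectangle $[r^{-1},1]\times[0,\phi]$ at one of the two corners $(1,\phi)$ or $(r^{-1},\phi)$.

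For the maximisation in $\theta$ one simply notes that $\partial_\theta g=2s\sin\theta\ge 0$ on $[0,\phi]$, so the maximum is attained at $\theta=\phi$. With $\theta=\phi$ fixed, the function $s\mapsto g(s,\phi)=(s-\cos\phi)^2+\sin^2\phi$ is a parabola opening upwards in $s$, and therefore its maximum on any closed interval is achieved at one of the endpoints. Applied to $[r^{-1},1]$, this yields
\[
\max_{s\in[r^{-1},1]} g(s,\phi) = \max\bigl\{g(1,\phi),\, g(r^{-1},\phi)\bigr\} = \max\bigl\{2-2\cos\phi,\; 1+r^{-2}-2r^{-1}\cos\phi\bigr\}.
\]
Using the identity $2-2\cos\phi = 4\sin^2(\phi/2)$ and taking square roots of both sides of the inequality $|u-u'|^2\le |u'|^2\cdot \max g$, we recover exactly the bound claimed in the lemma.

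There is no real obstacle here: everything reduces to convexity of a quadratic in $s$ and monotonicity of $g$ in $\theta$. The hypothesis $\phi<\pi/3$ is not actually needed for the argument itself (any $\phi<\pi$ suffices to keep $\sin\theta\ge 0$), but it is the regime in which the lemma will be applied later, and it guarantees in particular that $\cos\phi>1/2$, which might be convenient when invoking the lemma in Section~\ref{sec:smallDelta}. The only item requiring minor care is the symmetry reduction $|u|\le |u'|$, which relies on the fact that both the hypothesis $r^{-1}\le |u|/|u'|\le r$ and the conclusion are invariant under swapping $u$ and $u'$.
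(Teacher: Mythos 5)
Your proof is correct and takes essentially the same approach as the paper: both reduce to the cosine law and identify the extremal configuration at a corner of the admissible region. You make explicit and rigorous (via monotonicity in $\theta$ and upward convexity of the quadratic in $s$) what the paper leaves as ``it is not hard to see'' from its figure, and your observation that $\phi<\pi/3$ is not actually used in the lemma itself is correct — that hypothesis only matters for the later applications.
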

\begin{proof}
Without loss of generality assume that $|u'|\ge |u|$ and $\arg(u)-\arg(u')=\phi \ge 0$. 
Then we can assume that $u'$ is the point $A$ in Figure~\ref{fig:geom}, the length $OA$ is $|u'|$, the length $OD$ is $r^{-1}|u'|$, and that $u$ lies in the shaded area which we denote by $U$. 

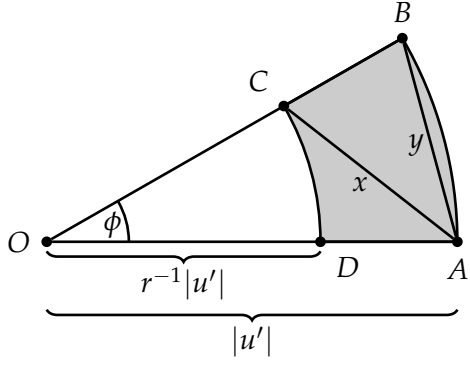
\begin{figure}[ht]
\centering
\begin{tikzpicture}[scale=1.8]
\pgfmathsetmacro{\angle}{30}
	\node[point,label=left:{$O$}] (O) at (0,0) {};
	\node[point,label=below right:{$D$}] (D) at (2,0) {};
	\node[point,label=below:{$A$}] (A) at (3,0) {};
	\node[point,label=above left:{$C$}] (C) at (\angle:2) {};
	\node[point,label=above:{$B$}] (B) at (\angle:3) {};
	
	\draw (O) -- (A);
	\draw (O) -- (B);
	%\draw (D) -- (A) arc(0:30:3) -- (C) arc(30:0:2) -- cycle;  
	\filldraw[fill opacity=0.2] (0:2cm) -- (0:3cm) arc (0:\angle:3cm) -- (\angle:2cm) arc (\angle:0:2cm) -- cycle;
	\draw (C) -- (A) node[midway,xshift=-4pt,yshift=-4pt] {$x$};
	\draw (B) -- (A) node[midway,xshift=-5pt,yshift=-2pt] {$y$};
	
	\draw (0.6,0) arc(0:\angle:0.6cm);
	\node[label=center:{$\phi$}] at (\angle/2:0.5) {};
	%\node[label=below:{$r^{-1}$}] at ($ (D)!0.5!(O) $) {};
	\draw [decorate,decoration={brace,mirror,amplitude=4pt}] ($ (O)+(0,-0.07)  $) -- ($ (D)+(0,-0.07)  $) node [midway,yshift=-12pt] {$r^{-1}|u'|$};
	\draw [decorate,decoration={brace,mirror,amplitude=4pt}] ($ (O)+(0,-0.5)  $) -- ($ (A)+(0,-0.5)  $) node [midway,yshift=-12pt] {$|u'|$};
\end{tikzpicture}
\caption{A diagram for the proof of Lemma~\ref{lem:geom}. The shaded area is $U$.\label{fig:geom}}
\end{figure}

The diameter of $U$ is an upper bound on $|u-u'|$, and it is not hard to see that the diameter of $U$ is the maximum of the distances between any pair of the points $A$, $B$, $C$, and $D$. By symmetry and by the triangle inequality one can see that this maximum is achieved by $x$ or $y$. In order to calculate these lengths we apply the cosine law in the triangles $OAC$ and $OAB$ (and a half-angle formula).
\end{proof}

\section{An induction for Theorem~\ref{thm:interval}}\label{sec:ball}

Let $G=(V,E)$ be a graph together with complex weights $\var=(\var_e)_{e\in E}$ assigned to the edges, a list of distinct vertices $W$, and a list of pre-assigned colours $L$ with $|W| = |L|$ (i.e.\ each vertex in the list $W$ is coloured with the corresponding colour from the list $L$). Recall that the vertices in $W$ are called fixed and those in $V \setminus W$ are called free.  

Let $\eps>0$ be given. We say a neighbour $v$ of a vertex $u \in V$ is a \emph{bad neighbour} of $u$ if $|\var_{uv}| \leq \eps$. We say a colour $\ell\in [k]$ is \emph{good} for a vertex $u \in V$ if  every fixed neighbour of $u$ is not coloured $\ell$; we call $\ell$ \emph{bad} if $u$ has at least one fixed, bad neighbour coloured $\ell$. 
We call a colour \emph{neutral} if it is neither good nor bad. Note that the definition of good, neutral and bad colours also applies if $u$ is fixed. 
We denote the set of good colours by $\G(G,W,L,u)$, the set of neutral colours by $\N(G,W,L,u,\eps)$ and the set of bad colours by $\B(G,W,L,u,\eps)$. We will also write $m(G,W,L,u,\eps, \ell)$ for the number of fixed bad neighbours of $u$ with colour $\ell$.
 When $G$, $W$, $L$, $u$, and $\eps$ are clear from the context we will write e.g.\ $\G=\G(G,W,L,u)$, $\B=\B(G,W,L,u,\eps)$, $\N=[k]\setminus(\G\cup \B)$, and $m(\ell)=m(G,W,L,u,\eps,\ell)$.

For a graph $G=(V,E)$ we call $W \subseteq V$ a \emph{leaf-independent set} if $W$ is an independent set and every vertex in $W$ has degree exactly $1$. In particular this means every vertex in $W$ has exactly one neighbour in $V \setminus W$.

\begin{theorem}
\label{thm:ball2}
Let $\Delta\in \mathbb{N}_{\geq 3}$.
Suppose that $k>\Delta$ and $0<\eps<1$ are such that there exists a positive constant $K< 1/(\Delta-1)$ with $\theta:=\arcsin(K) \in (0,\frac{\pi}{3(\Delta-1+\eps)})$ such that for each $d=0,\ldots,\Delta-1$, with $b=\Delta-d$,
\begin{align}\label{eq:the conditions2}
&0<\frac{(1+\varepsilon)^2}{(k-b)(1-K)^d-\varepsilon b}\leq K.
\end{align}
Then for each graph $G=(V,E)$ of maximum degree at most $
\Delta$ and every $\var=(\var_e)_{e\in E}$ satisfying for each $e \in E$ that
\begin{itemize}
\item[(i)] $|\var_e|\leq \eps$, or 
\item[(ii)] $|\arg(\var_e)|\leq \eps \theta$ and $\eps < |\var_e|\leq 1$, 
\end{itemize} 
 the following statements hold for $\bZ(G) = \bZ(G;k, \var)$.
\begin{enumerate}[\bf{\Alph*}]
\item\label{A}
For all lists $W$ of distinct vertices of $G$ such that $W$ forms a leaf-independent set in $G$ 
and for all lists of pre-assigned colours $L$ of length $|W|$, $\ZZ{W}{L}(G)\neq 0$.
\item\label{B}
For all lists $W=W'u$ of distinct vertices of $G$ such that $W$ is a leaf-independent set 
and for any two lists $L'\ell$ and $L'\ell'$ of length $|W|$:
\begin{enumerate}[\textbf{\textup{(\roman*)}}]
\item\label{Bi}
the angle between the vectors $\ZZ{W',u}{L',\ell}(G)$ and $\ZZ{W',u}{L',\ell'}(G)$ is at most $\theta$,
\item\label{Bii}
\begin{equation}
\frac{\ZZ{W',u}{L',\ell}(G)}{\ZZ{W',u}{L',\ell'}(G)}\in B(1,K).
\end{equation}
\end{enumerate}
\item\label{C}
For all lists $W=W'u$ of distinct vertices such that the initial segment $W'$ forms a leaf-independent set in $G$ 
and for all lists of pre-assigned colours $L'$ of length $|W'|$, the following holds. 
Write $\G=\G(G,W',L',u)$ and $\N=\N(G,W',L',u,\eps)$, let 
 $d$ be the number of free neighbours of $u$, and let $b=\Delta-d$. 
Then 
\begin{enumerate}[\textbf{\textup{(\roman*)}}]
\item\label{Ci}
for any $\ell\in \mathcal{G}\cup \N$, $\ZZ{W',u}{L',\ell}(G)\neq 0$, 
\item\label{Cii}
for any $\ell,\ell'\in \mathcal{G}\cup \N$, the angle between the vectors $\ZZ{W',u}{L',\ell}(G)$ and $\ZZ{W',u}{L',\ell'}(G)$ is at most $(d+b\eps)\theta$,
\item\label{Ciii}
for any $\ell,j\in \G$,
\begin{equation}
\frac{\ZZ{W',u}{L',\ell}(G)}{\ZZ{W',u}{L',j}(G)} \in B(1,K)^d.
\end{equation}
\end{enumerate}
\end{enumerate}
\end{theorem}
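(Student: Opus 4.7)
The plan is to prove statements A, B, and C simultaneously, by induction on $|V(G)|$. Within the inductive step the order will be B, then C, then A: the expansion used in B deletes a vertex, so it appeals to A and C on $G-u$; C is established on the current graph by a telescoping argument that uses B; and A is then established on the current graph by summing over one free vertex and applying C. The base case $|V(G)|=0$ is vacuous.

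\textbf{Proof of B.} Since $W=W'u$ is leaf-independent, $u$ has a unique neighbour $v$, and $v\notin W$. Expanding by the colour of $v$ gives the identity
\begin{equation*}
\ZZ{W',u}{L',\ell}(G)=\ZZ{W'}{L'}(G-u)+(w_{uv}-1)\,\ZZ{W',v}{L',\ell}(G-u).
\end{equation*}
The list $W'$ is still leaf-independent in $G-u$ (since $u$ is adjacent to no member of $W'$, $W$ being independent), so by the inductive hypothesis, statement A on $G-u$ gives $\ZZ{W'}{L'}(G-u)\ne 0$ and statement C on $(G-u,W',v,L')$ controls the vectors $\ZZ{W',v}{L',\ell}(G-u)$ in both angle and length. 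Writing $\rho_\ell:=\ZZ{W',v}{L',\ell}(G-u)/\ZZ{W'}{L'}(G-u)$ one has
\[\frac{\ZZ{W',u}{L',\ell}(G)}{\ZZ{W',u}{L',\ell'}(G)}=\frac{1+(w_{uv}-1)\rho_\ell}{1+(w_{uv}-1)\rho_{\ell'}}\,,\]
and combining the bounds on $\rho_\ell$ with the edge condition on $w_{uv}$ (either $|w_{uv}|\le\eps$ or $|\arg w_{uv}|\le\eps\theta$) and Proposition~\ref{prop:simplegeom} (with Lemma~\ref{lem:geom} for the refined analysis in Section~\ref{sec:smallDelta}) yields both B(i) and B(ii).

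\textbf{Proof of C.} Let $u_1,\dots,u_b$ be the fixed neighbours of $u$ and $v_1,\dots,v_d$ its free neighbours. One factors
\begin{equation*}
\ZZ{W',u}{L',\ell}(G)=\Big(\prod_{j:u_j\in W'}w_{uu_j}^{[L'(u_j)=\ell]}\Big)\cdot\sum_{i_1,\dots,i_d}\prod_r w_{uv_r}^{[i_r=\ell]}\,\ZZ{W',v_1,\dots,v_d}{L',i_1,\dots,i_d}(G-u).
\end{equation*}
The fixed-neighbour prefactor is $1$ for $\ell\in\G$, and for $\ell\in\N$ has argument at most $b\eps\theta$ by edge condition (ii), accounting for the $b\eps\theta$ term in C(ii). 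To control the free-neighbour sum I would telescope over $r=1,\dots,d$: at each step, compare ratios obtained by introducing the contribution of one further free neighbour via an auxiliary graph in which that neighbour is replaced by a pendant copy attached only to $u$, so that statement B applies and produces a factor lying in $B(1,K)$ at each of the $d$ steps. Multiplying the $d$ factors and invoking the convexity of $B(1,K)^d$ (Lemma~\ref{lem:convex image}, which requires $K<1/(\Delta-1)$) places the total ratio in $B(1,K)^d$, giving C(iii). Then C(i) follows since $0\notin B(1,K)^d$, and C(ii) follows from Proposition~\ref{prop:simplegeom}\ref{vectorangle} applied to the $d$ telescoping factors together with the neutral-colour prefactor.

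\textbf{Proof of A and main obstacle.} If $V=W$, leaf-independence forces $V=\emptyset$ and $\bZ(G)=1\ne 0$; otherwise pick any $v\in V\setminus W$ and write $\bZ(G)=\sum_{\ell=1}^k\ZZ{W,v}{L,\ell}(G)$. By C applied to $(G,W,v,L)$, the vectors $\ZZ{W,v}{L,\ell}(G)$ with $\ell\in\G\cup\N$ lie in a cone of angle at most $(d+b\eps)\theta<2\pi/3$ (using $\theta<\pi/(3(\Delta-1+\eps))$ and $\Delta\ge 3$), so Lemma~\ref{lem:vectorsum} lower bounds the magnitude of their sum by $\cos((d+b\eps)\theta/2)\sum_{\ell\in\G\cup\N}|\ZZ{W,v}{L,\ell}(G)|$. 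Each bad term is of magnitude at most $\eps$ times a comparable good-colour quantity (from the small weight $|w_e|\le\eps$ on the edge to a bad fixed neighbour), and the inequality \eqref{eq:the conditions2} is arranged precisely so that the good/neutral contribution dominates the bad contribution, giving $\bZ(G)\ne 0$. The main obstacle is the telescoping step in the proof of C, where one must arrange that each of the $d$ successive ratios lies in $B(1,K)$ and that their product remains in the convex set $B(1,K)^d$; balancing the rotations induced by case-(ii) weights against the magnitude drops from case-(i) weights during the telescoping is where the quantitative strength of the hypotheses on $K$, $\eps$, and $\theta$ is fully consumed.
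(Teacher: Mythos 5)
The choice of induction parameter is the central problem. You induct on $|V(G)|$ and place C before A within the inductive step, but the telescoping in C --- exactly as you describe, and as in the paper --- requires an auxiliary graph in which each of the $d$ free edges at $u$ is re-routed through a fresh pendant vertex $u_i$ so that statement B can be applied to compare pairs of colourings of the pendants. That auxiliary graph has $d$ extra vertices; after discarding the star around $u$ (which now separates off), the remaining graph $\widehat{H}$ has $|V(G)|+d-1-a$ vertices, where $a$ is the number of fixed neighbours of $u$. Whenever $d\ge a+2$ --- for instance when all of $u$'s neighbours are free --- this exceeds $|V(G)|$, so your induction on $|V(G)|$ does not close: you would need B for a strictly larger graph. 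The paper inducts instead on the number of \emph{free} vertices, which is preserved by the construction (the pendants $u_1,\ldots,u_d$ are added to the fixed list while $u$ leaves the relevant component), and within each level establishes A and B before C.

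Relatedly, your order B, C, A leaves C without a tool it genuinely needs. Your claim that C\ref{Ci} ``follows since $0\notin B(1,K)^d$'' is circular: the assertion that $\ZZ{W',u}{L',\ell}(G)/\ZZ{W',u}{L',j}(G)$ lies in $B(1,K)^d$ already presupposes the denominator is nonzero. The paper proves C\ref{Ci} by applying A (already proved at the current level) to $\widehat{H}$, and only then derives C\ref{Ciii} from B via the telescoping product. The rest of your sketch --- the one-step expansion identity in B, the use of Lemma~\ref{lem:vectorsum} and the bad/good split in A, the convexity of $B(1,K)^d$ from Lemma~\ref{lem:convex image} --- matches the paper's approach; repairing the induction parameter and the internal ordering essentially reproduces the paper's argument.
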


\subsection{Proof}
We prove that \ref{A}, \ref{B}, and \ref{C} hold by induction on the number of free vertices of a graph.
The base case consists of graphs with no free vertices. Clearly \ref{A} and \ref{B} hold in this case as they are both vacuous: if there are no free vertices then $W=V$ but then $W$ cannot be a leaf-independent set.

For statement \ref{C} we note that since there are no free vertices, $V\setminus W' = \{u\}$, and hence $G$ must be a star with centre $u$. 
Part \ref{C}\ref{Ci} follows since when $\ell\in \mathcal{G}\cup \N$ we have that  $\ZZ{W',u}{L',\ell}(G)$ is a product over nonzero edge-values.
Part \ref{Cii} follows since changing the colour of $u$ from $\ell$ to $j\in \G\cup\N$, we can obtain $\ZZ{W',u}{L',j}(G)$ from $\ZZ{W',u}{L',\ell}(G)$ by multiplying and dividing by at most $\deg(u)$ factors $w_{uv}$ with $\arg(w_{uv}) \leq\eps \theta$; hence the restricted partition function changes in angle by at most $\Delta \eps\theta$.
Part \ref{Ciii} follows similarly, as when there are no free vertices we must have $d=0$, and changing the colour of $u$ from $j$ to $\ell$ does not change the value of the restricted partition function since both colours are good. Hence $\ZZ{W',u}{L',\ell}(G)/\ZZ{W',u}{L',j}(G)=1\in B(1,K)^d$.

 Now let us assume that statements \ref{A}, \ref{B}, and \ref{C} hold for all graphs with $f\geq 0$ free vertices. We wish to prove the statements for graphs with $f+1$ free vertices. 
We start by proving \ref{A}.

\subsubsection{Proof of \ref{A}}
Let $u$ be any free vertex.
%Let $u$ be a free vertex in the neighbourhood of $W$ if one exists and otherwise let $u$ be any free vertex. 
We proceed using the fact that $|\ZZ{W}{L}(G)|=|\sum_{j=1}^k\ZZ{W,u}{L,j}(G)|$. 
Let $\G=\G(G,W,L,u)$, $\B=\B(G,W,L,u,\eps)$, $\N=[k]\setminus(\G\cup\B)$ and $\hat b=|\B|$.
Let $d$ be the number of free neighbours of $u$ and let $b=\Delta-d$. Note that $\hat b\leq b$ and $|\G|\geq k-b$.
After fixing $u$ to any $j\in[k]$ we have one less free vertex, and hence can apply \ref{C} using induction as necessary. 

There are two cases to consider. If $\hat b=0$ then 
by induction using \ref{C}\ref{Ci} we have that the $\ZZ{W,u}{L,j}(G)$ are non-zero and by \ref{C}\ref{Cii}  the angle between any two of the $\ZZ{W,u}{L,j}(G)$ is at most $\Delta\theta<\frac{\Delta}{\Delta-1}\pi/3\le 3/2\cdot \pi/3=\pi/2$. So the $\ZZ{W,u}{L,j}(G)$ all lie in some cone of angle at most $\pi/2$. In particular their sum must be in that cone and nonzero.

If $\hat b>0$ then $u$ must have at least one fixed neighbour, and hence $d\le\Delta-1$. 
Let $H$ be the graph obtained from $G$ by deleting all fixed neighbours of $u$, i.e.\ $H=G-(N_G(u)\cap W)$, and let $W'=W\setminus N_G(u)$ and $L'$ be the sublist of $L$ corresponding to the vertices in $W'$.
Observe that by definition for any $j\in[k]$ we have  
\begin{equation}\label{eq:from G to H in A}
\ZZ{W,u}{L,j}(G) =\ZZ{W',u}{L',j}(H)\cdot \prod_{\substack{v'\in W\cap N_G(u)\\\text{s.t. }L(v')=j }}\var_{uv'},
\end{equation}
where by $L(v')$ we mean the colour that the list $L$ pre-assigns to the vertex $v'$.
In particular, if $j\in \G$, then $\ZZ{W,u}{L,j}(G) =\ZZ{W',u}{L',j}(H)$.
Note also that by construction $u$ has no fixed neighbours in the graph $H$ and hence any colour is good for $u$ in $H$.
Let 
\begin{align*}
M:=\max\big\{\big|\ZZ{W',u}{L',j}(H)\big|: j \in [k]\},
\end{align*}
and assume that $j_M\in[k]$ achieves the maximum above. Note that $M>0$ by induction using \ref{C}\ref{Ci}. 
We then have by the triangle inequality
\begin{align*}
|\ZZ{W}{L}(G)/M|=\Big|\sum_{j=1}^k \ZZ{W,u}{L,j}(G)/\ZZ{W',u}{L',j_M}(H)\Big|
  \ge \Big|\sum_{j\in\G\cup\N} &\ZZ{W,u}{L,j}(G)/\ZZ{W',u}{L',j_M}(H)\Big| 
  \\&- \sum_{j\in\B} |\ZZ{W,u}{L,j}(G)/\ZZ{W',u}{L',j_M}(H)|.
\end{align*}
Since by induction using \ref{C}\ref{Cii} the pairwise angles between the $ \ZZ{W,u}{L,j}(G)$ for $j\in \G\cup \N$ are bounded by $(d+b\eps)\theta\leq (\Delta - 1 + \eps)\theta \leq \pi/3$ these vectors lie in a cone of angle at most $\pi/3$ and therefore, 
\[
\Big|\sum_{j\in\G\cup\N} \ZZ{W,u}{L,j}(G)/\ZZ{W',u}{L',j_M}(H)\Big|\geq \Big|\sum_{j\in\G} \ZZ{W,u}{L,j}(G)/\ZZ{W',u}{L',j_M}(H)\Big|.
\]
By induction using \ref{C}\ref{Ciii}, the numbers $\ZZ{W,u}{L,j}(G)/\ZZ{W',u}{L',j_M}(H)=\ZZ{W',u}{L',j}(H)/\ZZ{W',u}{L',j_M}(H)$ are contained in $B(1,K)^d$, for $j\in \G$.
By Lemma~\ref{lem:convex image} this is a convex set, as $K<1/d$.
Therefore, $\sum_{j\in\G} \ZZ{W,u}{L,j}(G)/\ZZ{W',u}{L',j_M}(H)\in |\G|\cdot B(1,K)^d$, which implies by convexity of $B(1,K)^d$ that
\[
\Big|\sum_{j\in\G} \ZZ{W,u}{L,j}(G)/\ZZ{W',u}{L',j_M}(H)\Big|\geq |\G|\cdot (1-K)^{d}.
\]
By \eqref{eq:from G to H in A} and the definition of $\mathcal{B}$, we have that for each $j\in \B$
\[
|\ZZ{W,u}{L,j}(G)|/|\ZZ{W',u}{L',j_M}(H)|\leq \eps.
\]
Combining these inequalities we arrive at
\[
|\ZZ{W}{L}(G)/M|\geq (k-b)(1-K)^{d}-\eps \hat{b} \geq (k-b)(1-K)^{d}-\eps b.
\]
Now the conditions~\eqref{eq:the conditions2} give that $\ZZ{W}{L}(G)\ne0$.

Next we will prove \ref{B}.

\subsubsection{Proof of \ref{B}}
Since $W = W'u$ is a leaf-independent set, $\deg(u)=1$ and the unique neighbour of $u$, which we call $v$, is free.
We start by introducing some notation.

We define complex numbers $z_j$ for $j\in [k]$ by
\begin{equation}
z_j:=\ZZ{W',u,v}{L',\ell,j}(G\setminus uv)=\ZZ{W',u,v}{L',\ell',j}(G\setminus uv)\,,
\end{equation}
where the second equality holds because $u$ is isolated in $G\setminus uv$. 
Let $w:=w_{uv}$ and define complex numbers $x_j$ and $y_j$ for $j\in [k]$ by
\begin{align*}
x_j&=\begin{cases}
z_j & \text{ if } j\neq \ell;\\
\var z_\ell &\text{ if } j=\ell\,,
\end{cases}&
y_j=\begin{cases}
z_j & \text{ if } j\neq \ell';\\
\var z_{\ell'} &\text{ if } j=\ell'\,.
\end{cases}
\end{align*}
Let $x=\sum_{j=1}^k x_j$ and $y=\sum_{j=1}^k y_j$.
Observe that $x=\ZZ{W',u}{L',\ell}(G)$ and $y=\ZZ{W',u}{L',\ell'}(G)$, and that we may apply induction to the restricted partition function evaluations represented by the $z_j$ because there are $f$ free vertices in $G\setminus uv$ when the vertices in $W'uv$ are fixed. 

For \ref{B}\ref{Bi} and \ref{Bii} we wish to bound the angle between $x$ and $y$ and to constrain the ratio $x/y$ respectively.
To do this we first bound $|y|$ and $|x-y|$. 

We note for later that by the definition of $x$ and $y$, we have $x-y=(\var-1)z_\ell+(1-\var)z_{\ell'}=(1-\var)(z_{\ell'}-z_\ell)$. Also $|1 - \var| \leq 1+ \eps$ by conditions (i) and (ii) in the statement of the theorem so $|x-y| \leq (1+\eps)|z_{\ell'}-z_\ell|$.

Let $\G=\G(G,W'u,L'\ell',v)$, $\B=\B(G,W'u,L'\ell',v,\eps)$, $\N=[k]\setminus(\G\cup\B)$, $\hat b=|\B|$, and suppose $v$ has $d$ free neighbours (in $G$ when $W'u$ is fixed). 
Let $H$ be the graph obtained from $G$ by deleting all fixed neighbours of $v$, i.e.\ $H=G-(N_G(v)\cap W)$, and let $W''=W\setminus N_G(v)$ and $L''$ be the sublist of $L$ corresponding to the vertices in $W''$.
Observe that by definition for any $j\in[k]$ we have  
\begin{equation}\label{eq:from G to H}
 z_j =\ZZ{W',v}{L',j}(G-u) =\ZZ{W'',v}{L'',j}(H)\cdot \prod_{\substack{v'\in W'\cap N_G(v)\\\text{s.t. }L'(v')=j }}\var_{vv'},
\end{equation}
where by $L'(v')$ we mean the colour that the list $L'$ pre-assigns to the vertex $v'$.
In particular, if $j\in \G$, then $\ZZ{W',v}{L',j}(G-u) =\ZZ{W'',v}{L'',j}(H)$.
Note also that by construction $v$ has no fixed neighbours in the graph $H$.
Now write $b=\Delta-d$. Note that $d\le\Delta-1$, and define $M$, $j^*$ by
\begin{align*}
M:=\max\big\{\big|\ZZ{W'',v}{L'',j}(H)\big|: j\in[k] \big\}=|\ZZ{W'',v}{L'',j^*}(H)|.
\end{align*}
We perform a similar calculation to the case $\hat b>0$ of \ref{A} to show that
\begin{equation}\label{eq:bound on y}
|y/M| \geq (k-b)(1-K)^{d}-\hat{b}\eps.
\end{equation}

To see this we have by the triangle inequality,
\begin{align*}
\left|{y}/{\ZZ{W'',v}{L'',j^*}(H)}\right| = \left|\sum_{j=1}^k{\ZZ{W',u,v}{L',\ell',j}(G)}/{\ZZ{W'',v}{L'',j^*}(H)}\right| 
 \ge\ & \left|\sum_{j\in\G\cup\N}\ZZ{W',u,v}{L',\ell',j}(G)/{\ZZ{W'',v}{L'',j^*}(H)}\right|
 \\
 \quad -&\sum_{j\in\B}\left|\ZZ{W',u,v}{L',\ell',j}(G)/{\ZZ{W'',v}{L'',j^*}(H)}\right|\,.
\end{align*}
As before, by \eqref{eq:from G to H} and by induction using \ref{C}\ref{Cii} the pairwise angles of the summands in the sum over $\G\cup\N$ is at most $(d+b\eps)\theta\leq \pi/3$. 
This implies that these numbers lie in a cone of angle at most $\pi/3$, which implies that
\[
\left|\sum_{j\in\G\cup\N}{\ZZ{W',u,v}{L',\ell',j}(G)}/{\ZZ{W'',v}{L'',j^*}(H)}\right|\geq \left|\sum_{j\in\G}{\ZZ{W',u,v}{L',\ell',j}(G)}/{\ZZ{W'',v}{L'',j^*}(H)}\right|.
\]
Now for any $j\in \G$, we have that ${\ZZ{W',u,v}{L',\ell',j}(G)}/{\ZZ{W'',v}{L'',j^*}(H)} = {\ZZ{W'',v}{L'',j}(H)}/{\ZZ{W'',v}{L'',j^*}(H)} \in B(1,K)^d$ by induction using \ref{C}\ref{Ciii}. 
As this set is convex, we have
\[
\left|\sum_{j\in\G}{\ZZ{W',u,v}{L',\ell',j}(G)}/{\ZZ{W'',v}{L'',j^*}(H)}\right|\geq  |\G|(1-K)^d.
\]
Since for any $j\in \B$ we have $m(j)\geq 1$, it follows by \eqref{eq:from G to H} and the definition of the $y_j$ that
\[
\sum_{j\in\B}\left|\ZZ{W',u,v}{L',\ell',j}(G)/{\ZZ{W'',v}{L'',j^*}(H)}\right|
\leq 
\hat{b} \eps^{m(j)} \leq \hat{b}\eps.
\]
Combining these two bounds we obtain \eqref{eq:bound on y}.

We next claim that
\begin{equation}\label{eq:bound x-y over y1}
\frac{|x-y|}{|y|}< K.
\end{equation}

To prove this we need to distinguish two cases, depending on whether or not $\ell$ or $\ell'$ is a bad colour in $G-u$ for the vertex $v$. We first introduce further notation. Let $\widehat\G=\G(G-u,W',L',v)$, $\widehat\B=\B(G-u,W',L',v,\eps)$, $\widehat\N= \N(G-u,W',L',v,\eps)$, and let $\widehat m(j)$ be the number of bad neighbours of $v$ in $G-u$ with pre-assigned colour $j$. Note that $v$ has $d \leq \Delta - 1$ free neighbours in $G-u$.
We now come to the two cases: either both $\ell ,\ell' \in\widehat\G\cup \widehat\N$, or at least one is in $\widehat\B$. 
In the first case, by induction using \ref{C}\ref{Cii} for $\ZZ{W',v}{L',j}(G-u) = \ZZ{W',v}{L',j}(G \setminus uv)=z_j $, the angle between $z_{\ell}$ and $z_{\ell'}$ is at most $(d+b\eps)\theta \le (\Delta - 1 + \eps)\theta \le\pi/3$, and hence we have
$|z_{\ell'}-z_\ell|\leq \max\{|z_\ell|,|z_{\ell'}|\}$ by Proposition~\ref{prop:simplegeom}.
Putting the established bounds together, we have
\begin{equation}
\label{eq:modify1}
\frac{|x-y|}{|y|} 
\le \frac{(1 + \eps) |z_{\ell'} - z_{\ell}| }{|y|}
\leq (1 + \eps)\max_{j\in\{\ell,\ell'\}}\frac{|z_j|/M}{|y|/M}\le \frac{1+\eps}{(k-b)(1-K)^d-\eps b}< K\,,
\end{equation}
where the second inequality follows using \eqref{eq:bound on y} and the definition of $M$, and the final inequality follows from the condition \eqref{eq:the conditions2}.
Hence \eqref{eq:bound x-y over y1} holds when $\ell,\ell'\in\widehat\G \cup \widehat\N$.
 
For the other case, when at least one of $\ell,\ell'$ is in $\widehat\B$, we use the triangle inequality and \eqref{eq:from G to H} to obtain
\begin{align*}
\frac{|z_\ell-z_{\ell'}|}{M}\le \frac{|z_\ell|}{M}+\frac{|z_{\ell'}|}{M}
\le \big(\varepsilon^{\widehat m(\ell)} +\varepsilon^{\widehat m(\ell')} \big)
\le (1+\varepsilon),
\end{align*}
since at least one of $\widehat m(\ell)$ and $\widehat m(\ell')$ is at least $1$ in this case. Therefore, using \eqref{eq:bound on y},
\begin{equation}
\label{eq:modify2}
 \frac{|x-y|}{|y|}
 \le \frac{(1 + \eps) |z_{\ell'} - z_{\ell}|/M }{|y|/M}
  \leq   \frac{(1+\varepsilon)^2}{(k-b)(1-K)^d-\eps b}< K,
\end{equation}
where the final inequality comes from the condition \eqref{eq:the conditions2}, establishing \eqref{eq:bound x-y over y1}.

Now, by Proposition~\ref{prop:simplegeom}, the angle $\gamma$ between $x$ and $y$ satisfies $\sin \gamma\le |x-y|/|y|< K$, and we conclude that $\gamma\le\arcsin(K)=\theta$ as required for \ref{B}\ref{Bi}. 
Additionally, we have 
\[
\frac{x}{y}= \frac{y+x-y}{y}=1+\frac{x-y}{y}\in B(1,K)\,,
\]
since $|x-y|/|y|<K$.
This gives \ref{B}\ref{Bii}. We now turn to \ref{C}.

\subsubsection{Proof of \ref{C}}
We start with \ref{Ci}, that is we will show that for any $\ell\in\G$, $\ZZ{W',u}{L',\ell}(G)\neq 0$. Since we have already proved \ref{A} and \ref{B} for the case of $f+1$ free vertices and since we have $f+1$ free vertices for $\ZZ{W',u}{L',\ell}(G)$, we might hope to immediately apply \ref{A}; the only problem is that $W'u$ is not a leaf- independent set, so we will modify $G$ first. 

Let $v_1,\dots,v_d$ be the free neighbours of $u$. We construct a new graph $H$ from $G$ by adding vertices $u_1, \ldots, u_d$ to $G$ and replacing each edge $uv_i$ with $u_iv_i$ for $i=1, \ldots, d$, while keeping all other edges of $G$ unchanged (so note that $u$ is only adjacent to its fixed neighbours in $H$). Each edge $e$ of $H$ is assigned value $\var'_e$ where $\var'_e = \var_e$ if $e$ is an edge of $G$ and $\var'_{u_iv_i} = \var_{uv_i}$ for the new edges $uv_i$. 
See Figure~\ref{fig:ballH} for an illustrative example.

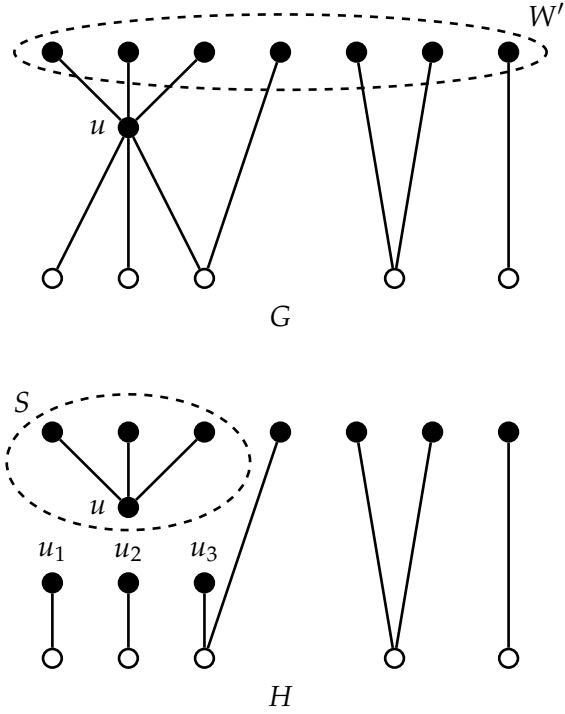
\begin{figure}[ht]
\centering
\begin{tikzpicture}
	\useasboundingbox (-1,1) rectangle (7,-4);
	\foreach \i in {1,2,...,7} {
	\node[vertex] (w\i) at (\i-1, 0) {};
	};
	
	\node[vertex,label=left:{$u$}] (u) at ($ (w2) + (0,-1) $) {};
	
	\foreach \i in {1,2,3} {
	\node[overtex] (f\i) at (\i-1, -3) {};
	};
	\node[overtex] (f4) at (4.5, -3) {};
	\node[overtex] (f5) at (6, -3) {};
	
	\draw (u)--(w1);
	\draw (u)--(w2);
	\draw (u)--(w3);
	\draw (u)--(f1);
	\draw (u)--(f2);
	\draw (u)--(f3);
	\draw (f3)--(w4);
	\draw (f4)--(w5);
	\draw (f4)--(w6);
	\draw (f5)--(w7);
	
	\draw[dashed] (3,0) ellipse (3.5cm and 0.5cm);
	\node (w) at (6.5,0.5) {$W'$};
	\node (G) at (3,-3.5) {$G$};
\end{tikzpicture}
\\
\begin{tikzpicture}
	\useasboundingbox (-1,1) rectangle (7,-4);
	\foreach \i in {1,2,...,7} {
	\node[vertex] (w\i) at (\i-1, 0) {};
	};
	
	\node[vertex,label=left:{$u$}] (u) at ($ (w2) + (0,-1) $) {};
	
	\foreach \i in {1,2,3} {
	\node[overtex] (f\i) at (\i-1, -3) {};
	\node[vertex,label=above:{$u_\i$}] (u\i) at ($ (f\i) + (0,1) $) {};
	};
	\node[overtex] (f4) at (4.5, -3) {};
	\node[overtex] (f5) at (6, -3) {};
	
	\draw (u)--(w1);
	\draw (u)--(w2);
	\draw (u)--(w3);
	\draw (u1)--(f1);
	\draw (u2)--(f2);
	\draw (u3)--(f3);
	\draw (f3)--(w4);
	\draw (f4)--(w5);
	\draw (f4)--(w6);
	\draw (f5)--(w7);
	
	\draw[dashed] ($ (u)!0.6!(w2) $) ellipse (1.6cm and 0.9cm);
	\node (s) at ($ (w1) + (-0.4,0.4) $) {$S$};
	\node (H) at (3,-3.5) {$H$};
\end{tikzpicture}
\caption{An illustration of the construction of $H$ (below) from $G$ (above) in the proof of~\ref{C}. Note that $W'$ forms a leaf-independent set, but that we do not require that $W'u$ has this property.}
\label{fig:ballH}
\end{figure}

Then by construction we have 
\begin{equation}\label{eq:G to H1}
\ZZ{W',u}{L',\ell}(G)=\ZZ{W',u,u_1,\ldots,u_d}{L',\ell,\ell,\ldots,\ell}(H).
\end{equation}

Notice that in $H$, the vertex $u$ together with its neighbours form a star $S$ that is disconnected from the rest of $H$ (and all vertices of $S$ are in $W = W'u$ so they are fixed). Thus $H$ is the disjoint union of $S$ and some graph $\widehat{H}$. 
Thus the partition function $z:=\ZZ{W',u,u_1,\ldots,u_d}{L',\ell,\ell,\ldots,\ell}(H)$ factors as 
\begin{equation}
\label{eq:star1}
\ZZ{W',u,u_1,\ldots,u_d}{L',\ell,\ell,\ldots,\ell}(H)
= \ZZ{W',u_1,\ldots,u_d}{L',\ell,\ldots,\ell}(\widehat{H}) \cdot
\ZZ{W',u}{L',\ell}(S);
\end{equation}
here we abuse notation by having a list $W'u_1\ldots u_d$ (resp. $W'u$) that may contain vertices not in $\widehat{H}$ (resp. $S$); such vertices and their corresponding colour should simply be ignored.

The fixed vertices in $\widehat{H}$ form a leaf-independent set, so we can apply $\ref{A}$ to conclude that the first factor above is nonzero. It is also clear that second factor above is nonzero because all vertices in $S$ are fixed and $\ell\in \mathcal{G}\cup \N$. Hence $z \not=0$ as required.

To prove part \ref{Cii}, we will apply \ref{B} to $\widehat{H}$ with $W'u_1\ldots u_d$ fixed, which (as above) is possible since we already proved \ref{B} for $f+1$ free vertices and $W'u_1\ldots u_d$ restricted to $\widehat{H}$ is a leaf-independent set. 
By \ref{B}\ref{Bi} the angle between
\[
\ZZ{W',u_1,\ldots,u_{d-1},u_d}{L',\ell,\ldots,\ell,\ell'}(\widehat{H})\qquad\text{and}\qquad\ZZ{W',u_1,\ldots,u_{d-1},u_d}{L',\ell,\ldots,\ell,\ell}(\widehat{H})
\]
is at most $\theta$. Continuing to change the label of each $u_i$ one step at the time, we conclude that the angle between
\[
\ZZ{W',u_1,\ldots,u_d}{L',\ell',\ldots,\ell'}(\widehat{H})\qquad\text{and}\qquad\ZZ{W',u_1,\ldots,u_d}{L',\ell,\ldots,\ell}(\widehat{H})
\]
is at most $d\theta$.
We next notice that since for \ref{Cii} we assume $\ell,\ell' \in \G \cup \N$, changing the colour of $u$ from $\ell$ to $\ell'$ can only change $\ZZ{W',u}{L',\ell}(S)$ by $\deg_S(u) \leq \Delta-d=b$ factors, each of argument at most $\eps \theta$ thus giving a total change of angle by at most $b \eps \theta$.
Hence by \eqref{eq:star1}, we therefore conclude that the angle between $\ZZ{W',u}{L',\ell}(G)$ and $\ZZ{W',u}{L',\ell'}(G)$ is at most $d\theta+b\eps\theta$.

To prove \ref{Ciii}
we observe that we can write for any $j,\ell\in [k]$ the telescoping product:
\begin{equation}\label{eq:telescope prime}
\frac{\ZZ{W',u_1,\ldots,u_d}{L',\ell,\ldots,\ell}(\widehat{H})}{\ZZ{W',u_1,\ldots,u_d}{L',j,\ldots,j}(\widehat{H})}=
\frac{\ZZ{W',u_1,\ldots,u_d}{L',\ell,\ldots,\ell}(\widehat{H})}{\ZZ{W',u_1,\ldots, u_{d-1},u_d}{L',\ell,\ldots,\ell,j}(\widehat{H})}\cdots
\frac{\ZZ{W',u_1,u_2,\ldots,u_d}{L',\ell,j,\ldots,j}(\widehat{H})}{\ZZ{W',u_1,\ldots,u_d}{L',j,\ldots,j}(\widehat{H})}\,.
\end{equation}
By \ref{B}\ref{Bii}, each of these factors is contained in $B(1,K)$ and hence
\[
\frac{\ZZ{W',u_1,\ldots,u_d}{L',\ell,\ldots,\ell}(\widehat{H})}{\ZZ{W',u_1,\ldots,u_d}{L',j,\ldots,j}(\widehat{H})}
\in B(1,K)^d.
\] 
Finally note that since $\ell, j \in \mathcal{G}$ then $\ZZ{W',u}{L',\ell}(G) = \ZZ{W',u_1,\ldots,u_d}{L',\ell,\ldots,\ell}(\widehat{H})$ by \eqref{eq:G to H1} and \eqref{eq:star1} (since $\ZZ{}{}(S) = 1$ in this case), and similarly for $j$. 
So we deduce that the ratio $\ZZ{W',u}{L',\ell}(G)/\ZZ{W',u}{L',j}(G)$ is contained in $B(1,K)^d$, as desired.
This completes the proof.

\section{Finding constants for Theorems~\ref{thm:interval}}\label{sec:condition}
Recall that we denote the base of the natural logarithm by $e$.
\begin{proposition}
\label{pr:constants}
Let $\Delta\in \mathbb{N}_{\geq 3}$ and $k \geq e \Delta + 1$.
Then there exists $\varepsilon\in(0,1)$ such that with $K=1/\Delta$, for each $d=0,\ldots,\Delta-1$, with $b=\Delta-d$,
\begin{align}
\label{eq:repeat}
&0<\frac{(1+\varepsilon)^2}{(k-b)(1-K)^d-\varepsilon b}\leq K,\qquad
\end{align}
and additionally $\arcsin(K)\leq \frac{\pi}{3(\Delta-1+\eps)}$.
\end{proposition}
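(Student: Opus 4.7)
The plan is to fix $K := 1/\Delta$ (so $K < 1/(\Delta-1)$ is automatic) and then choose $\varepsilon > 0$ small enough to handle both the inequality \eqref{eq:repeat} and the angle bound. The whole argument hinges on establishing the strict inequality $(k-b)(1-K)^d > \Delta = 1/K$ for every $d \in \{0,\ldots,\Delta-1\}$ with $b = \Delta - d$, since this is exactly what provides room to accommodate $\varepsilon > 0$.

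Step 1 (reduction to the worst $d$): Set $g(d) := (k-\Delta+d)(1-1/\Delta)^d$ and inspect $g(d)/g(d-1)$. One checks that $g$ is strictly decreasing on $\{0,1,\ldots,\Delta-1\}$ precisely when $k > 2\Delta$, which holds since $k \geq e\Delta + 1 > 2\Delta$ for $\Delta \geq 3$. Hence the minimum of $g$ on this range is attained at $d = \Delta - 1$, $b = 1$.

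Step 2 (worst case, $d = \Delta - 1$): Here $g(\Delta-1) = (k-1)(1-1/\Delta)^{\Delta-1}$, and $g(\Delta-1) > \Delta$ is equivalent to
\[
k > 1 + \Delta\left(1+\tfrac{1}{\Delta-1}\right)^{\Delta-1}.
\]
The classical fact that $(1+1/n)^n$ is strictly increasing in $n$ and converges to $e$ gives $(1+1/(\Delta-1))^{\Delta-1} < e$, so $1 + \Delta(1+1/(\Delta-1))^{\Delta-1} < 1 + e\Delta \leq k$. Therefore $g(d) > \Delta$ strictly for every $d \in \{0,\ldots,\Delta-1\}$; this is the precise point at which the hypothesis $k \geq e\Delta + 1$ is used, and it is the tightest constraint in the argument.

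Step 3 (choose $\varepsilon$): As $\varepsilon \to 0^+$, the left-hand side of \eqref{eq:repeat} tends to $1$ while the right-hand side tends to $K \cdot g(d) > 1$; the denominator $(k-b)(1-K)^d - \varepsilon b$ is also continuous in $\varepsilon$ and positive in the limit. Since $d$ ranges over a finite set, there is $\varepsilon_1 \in (0,1)$ such that \eqref{eq:repeat} holds for all $d$ whenever $0 < \varepsilon \leq \varepsilon_1$. For the angle bound, use $\arcsin(x) \leq x/\sqrt{1-x^2}$ to write $\arcsin(1/\Delta) \leq 1/\sqrt{\Delta^2-1}$, and check $1/\sqrt{\Delta^2-1} < \pi/(3(\Delta-1))$, which after squaring and rearranging reduces to $9(\Delta-1) < \pi^2(\Delta+1)$ — trivially true for all $\Delta \geq 1$. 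By continuity, some $\varepsilon_2 > 0$ makes $\arcsin(1/\Delta) \leq \pi/(3(\Delta-1+\varepsilon))$ whenever $\varepsilon \leq \varepsilon_2$. Taking $\varepsilon := \min(\varepsilon_1, \varepsilon_2)$ finishes the proof. The main obstacle is the monotonicity claim in Step 1 combined with the sharp constant in Step 2, where the asymptotic saturation of $(1+1/n)^n \to e$ explains why $e\Delta + 1$ is the natural threshold.
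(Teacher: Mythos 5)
Your proof is correct and reaches the same conclusion, but Step~1 uses a genuinely different argument from the paper. The paper works with $f_\Delta(d) := \Delta\bigl(\tfrac{\Delta}{\Delta-1}\bigr)^d - d$ (which satisfies $f_\Delta(d) < k-\Delta \iff g(d) > \Delta$ in your notation), shows that $f_\Delta$ is \emph{convex} on $[0,\Delta-1]$ by computing its second derivative, and then compares the two endpoint values to conclude the maximum is at $d=\Delta-1$. You instead apply a ratio test directly to $g(d)$ and show it is \emph{monotone decreasing} on $\{0,\dots,\Delta-1\}$ once $k$ is large enough, which is a strictly stronger structural fact and is arguably more elementary (no calculus needed, no separate endpoint comparison). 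Both routes pin the worst case at $d=\Delta-1$, and your Step~2 then coincides with the paper's. Your Step~3 is also a little more explicit than the paper: the paper merely asserts $\arcsin(1/\Delta) < \pi/(3(\Delta-1))$ for $\Delta\ge3$, whereas you derive it cleanly from $\arcsin(x)\le x/\sqrt{1-x^2}$. Two small imprecisions worth fixing: the monotonicity of $g$ on $\{0,\dots,\Delta-1\}$ actually holds ``precisely when'' $k > 2\Delta - 1$, not $k>2\Delta$ (the binding case is $d=1$), though your sufficient condition $k>2\Delta$ is of course implied by $k\ge e\Delta+1$; and the phrase ``the left-hand side of \eqref{eq:repeat} tends to $1$'' is loose --- the fraction itself tends to $1/g(d)$, and you should say you are comparing $(1+\varepsilon)^2 \to 1$ against $K\bigl((k-b)(1-K)^d - \varepsilon b\bigr) \to K g(d) > 1$ after cross-multiplying. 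Neither issue affects the validity of the argument.
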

Note that \eqref{eq:repeat} is precisely the condition \eqref{eq:the conditions2} required in the hypothesis of Theorem~\ref{thm:ball2}. Therefore
combining this proposition with Theorem~\ref{thm:ball2} proves Theorem~\ref{thm:interval} for $c_{\Delta} = e$ (we only need part \ref{A}).

\begin{proof}
We first observe that once $\eps$ is set to zero in \eqref{eq:repeat} the condition states: for all $d=0,\ldots,\Delta-1$, 
\begin{equation}\label{eq:the conditions0}
\frac{1}{(k+d-\Delta)(1-K)^d}\leq K.
\end{equation}
We will show that \eqref{eq:the conditions0} is satisfied with strict inequality provided $k\geq e\Delta + 1$ when $K=1/\Delta$. Since the expression involving $\eps$ in~\eqref{eq:repeat} is a continuous, increasing function of $\eps$,  there exists an $\eps>0$ for which \eqref{eq:repeat} is satisfied. Moreover, for $K=1/\Delta$ we also have $\arcsin(K)<\frac{\pi}{3(\Delta-1)}$ for any $\Delta\geq 3$. So for $\eps>0$ small enough all conditions will be satisfied.

Noting that we assume $\Delta\ge3$, let us define for $d\geq 0$ the function
\[
f_\Delta(d):=\Delta\left(\frac{\Delta}{\Delta-1}\right)^d-d.
\]
We observe that condition \eqref{eq:the conditions0} with $K=1/\Delta$ is satisfied with strict inequality provided $f_\Delta(d)<k-\Delta$ for each $d=0,\ldots,\Delta-1$.

We first claim that $f_\Delta(d)$ as a function of $d$ is convex on $\mathbb{R}_{\geq 0}$.
Indeed, its second derivative in $d$ is given by
\[
  f_\Delta''(d)=\log^2\left(\frac{\Delta}{\Delta-1}\right)\left(f_d(\Delta)+d\right)>0.
\]
This implies that $f_\Delta$ attains its maximum on $[0,\Delta-1]$ at a boundary point: either when $d=0$ or when $d=\Delta-1$.
In fact the maximum is attained at $d=\Delta-1$, since we observe
\[
f_{\Delta}(\Delta-1) = \Delta\sum_{i=0}^{\Delta-1}\binom{\Delta-1}{i}\Big(\frac{1}{\Delta-1}\Big)^i-(\Delta-1) \geq 2\Delta -(\Delta-1) > \Delta = f_\Delta(0),
\]
where the penultimate inequality holds by taking the first two terms in the sum.
It now remains to check that $f_{\Delta}(\Delta-1)< k-\Delta$ whenever $k\geq e\Delta+1$. This holds since
\[
f_{\Delta}(\Delta-1)+\Delta-1=\Delta\left(1+\frac{1}{\Delta-1}\right)^{\Delta -1}<\Delta e.
\]
Thus we obtained that if $k\geq e\Delta+1$, we can choose $K=1/\Delta$, such that all the conditions in \eqref{eq:the conditions0} are satisfied with strict inequality.
This finishes the proof.
\end{proof}

\begin{rem}
We could have given a slightly tighter analysis by parametrising $K=x/\Delta$ in the proof given above.
However, it is not difficult to show that as $\Delta\to \infty$ the optimal choice for $x$ converges to $1$. In the next section we give better bounds for small values of $\Delta$ by adding additional constraints and using a computer to find the optimal value of $K$.
\end{rem}

%\subsection{Allowing \texorpdfstring{$|w|>1$}{|w|>1}}
\begin{rem}\label{rem:closed interval}
We note that if one replaces \eqref{eq:the conditions2} by 
\begin{align}\label{eq:the conditions alt}
0<\frac{(1+\varepsilon)^{2+b}}{(1-K)^d(k-b)-\varepsilon b(1+\eps)^b}\leq K
\end{align}
for all $d=0,\ldots,\Delta-1$ and $b=\Delta-d$, one can give essentially the same proof (where only \ref{C}\ref{Ciii} needs to be modified) to conclude that in Theorem~\ref{thm:interval} we can in fact guarantee an open set containing the closed interval $[0,1]$.
\end{rem}

\section{Improvements for small values of \texorpdfstring{$\Delta$}{Delta}}\label{sec:smallDelta}

In the previous section we showed that we can take $c_\Delta\leq e$ for each $\Delta\geq 3$ in the statement of Theorem~\ref{thm:interval}. In this section we prove the second part of Theorem~\ref{thm:interval}, by showing that for small values of $\Delta$, we can improve the bound on $c_\Delta$.
We do this by proving a slightly different version of Theorem~\ref{thm:ball2} in which we constrain the ratios of the restricted partition functions to lie in slightly different sets.

We first define a function $f$ by 
\[
f(d,K,\phi) = \max\Big( 2\sin \left(\phi/2\right), \sqrt{1+ (1+K)^{-2d} - 2 \cos(\phi)(1+K)^{-d}} \Big),
\]
where $d$ is a positive integer, $K\in (0,1)$, and $\phi$ is an angle.

\begin{theorem}
\label{thm:ballimprove}
Let $\Delta\in \mathbb{N}_{\geq 3}$.
Suppose that $k>\Delta$ and $0<\eps<1$ are such that there exist constants $K\in(0,1)$ with $\theta:=\arcsin(K) \in (0,\frac{\pi}{3(\Delta-1+\eps)})$ satisfying, for each $d=0,\ldots,\Delta-1$ with $b:=\Delta-d$, that
\begin{align}
\frac{(1+\varepsilon)^2(1+K)^d}{\cos((d+b\eps)\theta/2)(k-b)-\varepsilon b (1+K)^d} &\leq K  \qquad \text{ for } d = 0, \ldots, \Delta -2; \label{eq:new1}\\ 
\frac{(1+\varepsilon)(1+K)^d}{\cos((d+b\eps)\theta/2)(k-b)-\varepsilon b (1+K)^d} &\leq \frac{K}{f(d,K,(d+b\eps)\theta)} \hspace{1 cm} \nonumber\\
&\qquad \qquad \text{ for } d = 0, \ldots, \Delta -1. \label{eq:new2}
\end{align}
Then for each graph $G=(V,E)$ of maximum degree at most $\Delta$ and every $\var=(\var_e)_{e\in E}$ satisfying for each $e \in E$ that
\begin{itemize}
\item[(i)] $|\var_e|\leq \eps$, or 
\item[(ii)] $|\arg(\var_e)|\leq \eps \theta$ and $\eps <|\var_e|\leq 1$, 
\end{itemize} 
 the following statements hold for $\bZ(G) = \bZ(G;k, \var)$.
\begin{enumerate}[\bf{\Alph*'}]
\item\label{A'}
For all lists $W$ of distinct vertices of $G$ such that $W$ forms a leaf-independent set in $G$ 
and for all lists of pre-assigned colours $L$ of length $|W|$, $\ZZ{W}{L}(G)\neq 0$.
\item\label{B'}
For all lists $W=W'u$ of distinct vertices of $G$ such that $W$ is a leaf-independent set 
and for any two lists $L'\ell$ and $L'\ell'$ of length $|W|$:
\begin{enumerate}[\textbf{\textup{(\roman*)}}]
\item\label{B'i}
the angle between the vectors $\ZZ{W',u}{L',\ell}(G)$ and $\ZZ{W',u}{L',\ell'}(G)$ is at most $\theta$,
\item\label{B'ii}
\begin{equation}
\frac{|\ZZ{W',u}{L',\ell}(G)|}{|\ZZ{W',u}{L',\ell'}(G)|}\leq 1+K.
\end{equation}
\end{enumerate}
\item\label{C'}
For all lists $W=W'u$ of distinct vertices such that the initial segment $W'$ forms a leaf-independent set in $G$ 
and for all lists of pre-assigned colours $L'$ of length $|W'|$, the following holds. 
Write $\G=\G(G,W',L',u)$ and $\N=\N(G,W',L',u,\eps)$, let $d$ be the number of free neighbours of $u$, let $b=\Delta-d$, and let $m(\ell)$ be the number of fixed, bad neighbours of $u$ with pre-assigned colour $\ell$.
Then 
\begin{enumerate}[\textbf{\textup{(\roman*)}}]
\item\label{C'i}
for any $\ell\in \mathcal{G}\cup \N$, $\ZZ{W',u}{L',\ell}(G)\neq 0$, 
\item\label{C'ii}
for any $\ell,\ell'\in \mathcal{G}\cup \N$, the angle between the vectors $\ZZ{W',u}{L',\ell}(G)$ and $\ZZ{W',u}{L',\ell'}(G)$ is at most $(d+b\eps)\theta$,
\item\label{C'iii}
for any $\ell\in [k]$ and $j\in \G$
\begin{equation}
\frac{|\ZZ{W',u}{L',\ell}(G)|}{|\ZZ{W',u}{L',j}(G)|}\leq \eps^{m(\ell)}(1+K)^{d}.
\end{equation}
\end{enumerate}
\end{enumerate}
\end{theorem}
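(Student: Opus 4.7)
The plan is to prove statements \ref{A'}, \ref{B'}, and \ref{C'} jointly by induction on the number of free vertices of $G$, closely mirroring the structure of the proof of Theorem~\ref{thm:ball2}. The base case (no free vertices) is handled essentially as before: \ref{A'} and \ref{B'} are vacuous, and in \ref{C'} the graph reduces to a star centred at $u$ so that $\ZZ{W',u}{L',\ell}(G)$ is a product of edge weights; the bound $\eps^{m(\ell)}(1+K)^0=\eps^{m(\ell)}$ in \ref{C'iii} follows from $|\var_e|\leq\eps$ on bad edges and $|\var_e|\leq 1$ elsewhere, and the angle bound \ref{C'ii} is obtained exactly as in the proof of Theorem~\ref{thm:ball2}.

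For the inductive step of \ref{A'}, I would pick any free vertex $u$, form $H=G-(N_G(u)\cap W)$ in which $u$ has no fixed neighbours, and normalise by $M:=\max_j|\ZZ{W',u}{L',j}(H)|$, where $W'=W\setminus N_G(u)$ and $L'$ is the corresponding sublist. Since every colour is good for $u$ in $H$, the inductive \ref{C'iii} applied to $H$ gives $|\ZZ{W',u}{L',j}(H)|\geq M(1+K)^{-d}$ for every $j\in[k]$. As the good-and-neutral summands $\ZZ{W,u}{L,j}(G)$ have pairwise angles at most $(d+b\eps)\theta\leq\pi/3$ by \ref{C'ii}, Lemma~\ref{lem:vectorsum} gives a lower bound of $\cos((d+b\eps)\theta/2)(k-b)(1+K)^{-d}M$ on their sum, while the bad summands contribute at most $\hat b\,\eps\,M$; this yields
\[
|\bZ(G)|/M\geq(1+K)^{-d}\bigl[\cos((d+b\eps)\theta/2)(k-b)-\eps b(1+K)^d\bigr],
\]
which is positive by \eqref{eq:new2}. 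The inductive step of \ref{C'} is essentially unchanged from \ref{C}: the graph surgery splitting $u$ into auxiliary leaves $u_1,\dots,u_d$ isolates the star $S$ of fixed edges at $u$, so that \ref{C'i} follows by applying \ref{A'} to the modified graph, \ref{C'ii} from $d$ applications of \ref{B'i} together with a $b\eps\theta$ contribution from $S$, and \ref{C'iii} by telescoping the modulus bound \ref{B'ii} to obtain a factor $(1+K)^d$ and multiplying by the star factor $|\ZZ{W',u}{L',\ell}(S)|\leq\eps^{m(\ell)}$.

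The genuinely new ingredient appears in \ref{B'}. With notation as in the proof of \ref{B}, $x-y=(1-\var)(z_{\ell'}-z_\ell)$, and with $H$ obtained from $G$ by removing the fixed neighbours of $v$ (which include $u$), the argument from \ref{A'} gives $|y|/M\geq(1+K)^{-d}[\cos((d+b\eps)\theta/2)(k-b)-\eps b(1+K)^d]$ where $M=\max_j|\ZZ{W'',v}{L'',j}(H)|$. When at least one of $\ell,\ell'$ lies in $\widehat\B$, the triangle inequality yields $|z_{\ell'}-z_\ell|/M\leq 1+\eps$, and then
\[
|x-y|/|y|\leq\frac{(1+\eps)^2(1+K)^d}{\cos((d+b\eps)\theta/2)(k-b)-\eps b(1+K)^d}\leq K
\]
by \eqref{eq:new1}. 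When instead $\ell,\ell'\in\widehat\G\cup\widehat\N$, Lemma~\ref{lem:geom} is essential: combining the angle bound from \ref{C'ii} with the modulus ratio bound from \ref{C'iii} applied with $r=(1+K)^d$, it yields $|z_{\ell'}-z_\ell|\leq f(d,K,(d+b\eps)\theta)\max(|z_\ell|,|z_{\ell'}|)\leq f(d,K,(d+b\eps)\theta)M$, and \eqref{eq:new2} then delivers $|x-y|/|y|\leq K$. Finally, Proposition~\ref{prop:simplegeom}\ref{vectorangle} converts $|x-y|/|y|\leq K$ into the angle bound \ref{B'i}, while $|x|\leq|y|+|x-y|$ gives the modulus bound \ref{B'ii}.

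The main obstacle is the bookkeeping in \ref{B'}: the switch from the convex disk containment of \ref{Ciii} to the multiplicative modulus bound of \ref{C'iii} introduces extra factors of $(1+K)^d$ that must be absorbed into the right-hand sides of \eqref{eq:new1} and \eqref{eq:new2}, and Lemma~\ref{lem:geom} is precisely the tool allowing the good-neutral case to scale with the small quantity $f(d,K,(d+b\eps)\theta)$ rather than the loose factor $1$ from Proposition~\ref{prop:simplegeom}\ref{vectorlength}. A small subtlety is the application of Lemma~\ref{lem:geom} when \ref{C'iii} supplies only a one-sided ratio bound (for example when one of $\ell,\ell'$ is neutral); this should be addressed by a careful choice of the effective $r$ or by separately treating the neutral sub-case via the estimate $\max(|z_\ell|,|z_{\ell'}|)\leq M$. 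Lastly, \eqref{eq:new1} is required only for $d\leq\Delta-2$, since Case 2 forces $v$ to have at least two fixed neighbours in $G$ (namely $u$ and the bad one), so $b\geq 2$ and hence $d\leq\Delta-2$.
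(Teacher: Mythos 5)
Your proposal is correct and mirrors the paper's proof: joint induction on the number of free vertices, the graph surgery isolating a star at $u$ for \ref{C'}, Lemma~\ref{lem:geom} in place of convexity for the good--good case in \ref{B'}, and the multiplicative modulus bounds in \ref{C'iii} with the extra $(1+K)^d$ factors built into \eqref{eq:new1}--\eqref{eq:new2}. The ``small subtlety'' you flag for neutral colours is resolved exactly as you suggest --- the paper splits the good--neutral situation into a ``both good'' case (Lemma~\ref{lem:geom}) and a ``one neutral'' case (Proposition~\ref{prop:simplegeom}\ref{vectorlength}, which forces $d\le\Delta-2$ and so falls under \eqref{eq:new1}) --- so your outline is essentially the same argument.
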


The proof is almost the same as the proof of Theorem~\ref{thm:ball2}. They key thing to do is to constrain the ratios \[\ZZ{W',u}{L',\ell}(G)/\ZZ{W',u}{L',\ell'}(G)\]
in such a way that allows for an induction.
To do this in this case, we essentially replace a convexity argument by an application of Lemma~\ref{lem:vectorsum}.
For convenience of the reader, we give a full proof.
\subsection{Proof}
We prove that \ref{A'}, \ref{B'}, and \ref{C'} hold by induction on the number of free vertices of a graph.
The base case consists of graphs with no free vertices. Clearly \ref{A'} and \ref{B'} hold in this case as they are both vacuous: if there are no free vertices then $W=V$, but then $W$ cannot be a leaf-independent set.

For statement \ref{C'} we note that since there are no free vertices, $V\setminus W' = \{u\}$, and hence $G$ must be a star with centre $u$. 
Part \ref{C'}\ref{C'i} follows since when $\ell\in \mathcal{G}\cup \N$ we have that  $\ZZ{W',u}{L',\ell}(G)$ is a product over nonzero edge-values.
Part \ref{C'ii} follows since, by changing the colour of $u$ from $\ell$ to $j\in \G\cup\N$, we obtain $\ZZ{W',u}{L',j}(G)$ from $\ZZ{W',u}{L',\ell}(G)$ by multiplying and dividing by at most $\deg(u)$ factors $w_{uv}$ with $\arg(w_{uv}) \leq\eps \theta$; hence the restricted partition function changes in angle by at most $\Delta \eps\theta \leq (d + b \eps)\theta$ (since $d=0$ and so $b = \Delta$).
Part \ref{C'iii} follows similarly, as when there are no free vertices we must have $d=0$, and changing the colour of $u$ from $j$ to $\ell$ corresponds to multiplying 
$\ZZ{W',u}{L',j}(G)$ by at most $\deg(u)$ factors $w_{uv}$ all satisfying $|w_{uv}|\leq 1$ and $m(\ell)$ of the factors satisfying $|w_{uv}| \leq \eps$.

Now let us assume that statements \ref{A'}, \ref{B'}, and \ref{C'} hold for all graphs with $f\geq 0$ free vertices. We wish to prove the statements for graphs with $f+1$ free vertices. 
We start by proving \ref{A'}.

\subsubsection{Proof of \ref{A'}}
Let $u$ be any free vertex.
%Let $u$ be a free vertex in the neighbourhood of $W$ if one exists and otherwise let $u$ be any free vertex. 
We proceed using the fact that $|\ZZ{W}{L}(G)|=|\sum_{j=1}^k\ZZ{W,u}{L,j}(G)|$. 
Let $\G=\G(G,W,L,u)$, $\B=\B(G,W,L,u,\eps)$, $\N=[k]\setminus(\G\cup\B)$ and $\hat b=|\B|$.
Let $d$ be the number of free neighbours of $u$ and let $b=\Delta-d$. Note that $\hat b\leq b$ and $|\G|\geq k-b$. 
After fixing $u$ to any $j\in[k]$ we have one less free vertex, and hence can apply \ref{C'} using induction as necessary. 

There are two cases to consider. If $\hat b=0$ then 
by induction using \ref{C'}\ref{C'i} we have that the $\ZZ{W,u}{L,j}(G)$ are non-zero and by \ref{C'}\ref{C'ii}  the angle between any two of the $\ZZ{W,u}{L,j}(G)$ is at most $\Delta\theta<\pi/2$.
Hence by Lemma~\ref{lem:vectorsum}
\[
|\ZZ{W}{L}(G)|=\Big|\sum_{j=1}^k \ZZ{W,u}{L,j}(G)\Big|\geq \cos(\pi / 4)\sum_{j=1}^k \big|\ZZ{W,u}{L,j}(G)\big| > 0\,.
\]
If $\hat b>0$ then $u$ must have at least one fixed neighbour, and hence $d\le\Delta-1$. Let 
\begin{align*}
M:=&\min\big\{\big|\ZZ{W,u}{L,j}(G)\big|: j \in \G \big\}, \\
m:=& \max\big\{\big|\ZZ{W,u}{L,j}(G)\big|: j\in\mathcal{B} \big\},
\end{align*}
and assume that $j_M \in \G$ achieves the minimum above and $j_m \in \B$ achieves the maximum. Note that $M>0$ by induction using \ref{C'}\ref{C'i}.  
Note further by induction using \ref{C'}\ref{C'iii} that 
\begin{equation}
\label{eq:m}
m  \leq 
\varepsilon^{m(j_m)}(1+K)^{d} |\ZZ{W',u}{L',j_M}(G)| \leq \varepsilon(1+K)^{d} M,
\end{equation}
where we used that $m(j_m)\geq 1$ since $j_m \in \mathcal{B}$.  We then have
 
\begin{align*}
|\ZZ{W}{L}(G)|=\Big|\sum_{j=1}^k \ZZ{W,u}{L,j}(G)\Big|
  &\ge \Big|\sum_{j\in\G\cup\N} \ZZ{W,u}{L,j}(G)\Big| - \sum_{j\in\B} |\ZZ{W,u}{L,j}(G)|
  \\
  &\ge \cos((d+b\eps)\theta/2) \sum_{j\in \G\cup \N}| \ZZ{W,u}{L,j}(G)|-  \sum_{j\in\B} |\ZZ{W,u}{L,j}(G)|
\\&\ge M|\G|\cos((d+b\eps)\theta/2) - m|\B|
\\&\geq M\big[(k-b)\cos((d+b\eps) \theta/2)-b \varepsilon(1+K)^d\big]\,,
\end{align*}
where the first inequality is the triangle inequality, the second uses \ref{C'}\ref{C'ii} and Lemma~\ref{lem:vectorsum}, the third uses the definition of $M$ and $m$, and the fourth follows from \eqref{eq:m}. 
Now the conditions~\eqref{eq:new1} give that $\ZZ{W}{L}(G)\ne0$ (recalling we have $d \leq \Delta - 1$ and noting the denominator in \eqref{eq:new1} must be positive).

Next we will prove \ref{B'}.

\subsubsection{Proof of \ref{B'}}
The proof starts in exactly the same way as the proof of \ref{B}.
Recall that $\deg(u)=1$ and that its unique neighbour, which we call $v$, is free.
We start by introducing some notation.

We define complex numbers $z_j$ for $j\in [k]$ by
\begin{equation}
z_j:=\ZZ{W',u,v}{L',\ell,j}(G\setminus uv)=\ZZ{W',u,v}{L',\ell',j}(G\setminus uv)
= \ZZ{W',v}{L',j}(G - u) \,,
\end{equation}
where the equalities follow because $u$ is isolated in $G\setminus uv$ and so makes no contribution to the partition function. 
Let $w:=w_{uv}$ and define complex numbers $x_j$ and $y_j$ for $j\in [k]$ by
\begin{align*}
x_j&=\begin{cases}
z_j & \text{ if } j\neq \ell;\\
\var z_\ell &\text{ if } j=\ell\,,
\end{cases}&
y_j=\begin{cases}
z_j & \text{ if } j\neq \ell';\\
\var z_{\ell'} &\text{ if } j=\ell'\,.
\end{cases}
\end{align*}
Let $x=\sum_{j=1}^k x_j$ and $y=\sum_{j=1}^k y_j$.
Observe that $x=\ZZ{W',u}{L',\ell}(G)$ and $y=\ZZ{W',u}{L',\ell'}(G)$, and that we may apply induction to the restricted partition function evaluations represented by the $z_j$ because there are $f$ free vertices in $G\setminus uv$ when the vertices in $W'uv$ are fixed. 

For \ref{B'}\ref{B'i} and \ref{B'ii} we wish to bound the angle between $x$ and $y$ and the ratio $|x|/|y|$ respectively.
To do this we first bound $|y|$ and $|x-y|$. 

We note for later that by the definition of $x$ and $y$, we have $x-y=(\var-1)z_\ell+(1-\var)z_{\ell'}=(1-\var)(z_{\ell'}-z_\ell)$. Also $|1 - \var| \leq 1+ \eps$ by conditions (i) and (ii) in the statement of the theorem so $|x-y| \leq (1+\eps)|z_{\ell'}-z_\ell|$.

Let $\G=\G(G,W'u,L'\ell',v)$, $\B=\B(G,W'u,L'\ell',v,\eps)$, $\N=[k]\setminus(\G\cup\B)$, $\hat b=|\B|$, and suppose $v$ has $d$ free neighbours (in $G$ when $W'u$ is fixed), so $d = \Delta - 1$. 
Let $H$ be the graph obtained from $G$ by deleting all fixed neighbours of $v$, i.e.\ $H=G-(N_G(v)\cap W)$, and let $W''=W\setminus N_G(v)$ and $L''$ be the sublist of $L$ corresponding to the vertices in $W''$. 
Observe that by definition for any $j\in [k]$, we have the similar identities
\begin{equation}\label{eq:from G to H improved}
z_j:= \ZZ{W',v}{L', j}(G - u)=\ZZ{W'',v}{L'',j}(H)\cdot \prod_{\substack{v'\in W'\cap N_G(v)\\ \text{s.t. } L'(v')=j}}w_{vv'},
\end{equation}
\begin{equation}\label{eq:from G to H improved2}
\ZZ{W',u,v}{L',\ell',j}(G)=\ZZ{W'',v}{L'',j}(H)\cdot \prod_{\substack{v'\in W \cap N_G(v)\\ \text{s.t. } L'(v')=j}}w_{vv'},
\end{equation}
where by $L'(v')$ we mean the colour that the list $L'$ assigns to the vertex $v'$.
In particular, if $j\in \G$, we have $\ZZ{W',v}{L',j}(G-u)=\ZZ{W'',v}{L'',j}(H)$. 

Now write $b=\Delta-d$. Note that $d\le\Delta-1$, and define $M$, $j^*$, and $C$ by
\begin{align*}
M&:=\min\big\{\big|\ZZ{W'',v}{L'',j}(H)\big|: j\in [k] \big\}=\ZZ{W'',v}{L'',j^*}(H);\\
C&:=(k-b)\cos((d+b\eps)\theta/2)- b\varepsilon (1+K)^d\,.
\end{align*}
Note that for all $j \in [k]$, we have 
\begin{equation}
\label{eq:HM}
M \leq |\ZZ{W'',v}{L'',j}(H)| \leq M(1+K)^d,
\end{equation}
 where the upper bound follows by induction using \ref{C'}\ref{C'iii}
(here $H$ with fixed vertices $W''v$ has fewer free vertices than $G$ with fixed vertices $W$) and noting that all colours in $[k]$ are good for $v$ in $H$. We perform a similar calculation to the case $\hat b>0$ of \ref{A'} to bound $|y|$. 
We have
\begin{align*}
|y| = \Big|\ZZ{W',u}{L',\ell'}(G)\Big| 
  &\ge \Big|\sum_{j\in\G\cup\N}\ZZ{W',u,v}{L',\ell',j}(G)\Big|-\sum_{j\in\B}|\ZZ{W',u,v}{L',\ell',j}(G)|
\\  &\ge \cos((d+b\eps)\theta/2) \sum_{j\in\G}|\ZZ{W',u,v}{L',\ell',j}(G)| -\sum_{j\in\B}|\ZZ{W',u,v}{L',\ell',j}(G)|
\\  &\ge \cos((d+b\eps)\theta/2) \sum_{j\in\G}|\ZZ{W',v}{L',j}(H)| - \eps\sum_{j\in\B}|\ZZ{W',v}{L',j}(H)|  
\\&\ge M|\G|\cos((d+b\eps)\theta/2) - M|\B|\eps(1+K)^d \ge M\cdot C\,;
\end{align*}
here the first inequality is the triangle inequality, the second follows from Lemma~\ref{lem:vectorsum} and induction using \ref{C'}\ref{C'ii}, the third follows from \eqref{eq:from G to H improved2} and the fourth follows from \eqref{eq:HM}. 

We next claim that
\begin{equation}\label{eq:bound x-y over y}
\frac{|x-y|}{|y|}\leq K.
\end{equation}
To prove this, we will need to distinguish three cases, for which we now introduce the notation. 
Let $\widehat\G=\G(G-u,W',L',v)$, $\widehat\B=\B(G-u,W',L',v,\eps)$, $\widehat\N= \N(G-u,W',L',v,\eps)$, and let $\widehat m(j)$ be the number of bad neighbours of $v$ in $G-u$ with pre-assigned colour $j$. 
Then, by \eqref{eq:from G to H improved} and \eqref{eq:HM} we have for any $j$, 
\begin{align}
\label{eq:Zlnew1}
|z_j| &\le \varepsilon^{\widehat m(j)}|\ZZ{W'',v}{L'',j}(H)|\le \varepsilon^{\widehat m(j) }M(1+K)^d.
\end{align}

We now come to the three cases: either (a) $\ell ,\ell' \in \widehat\G$, or (b) $\ell \in \widehat\B$ or $\ell' \in \widehat\B$, or (c) both $\ell$ and $\ell'$ are contained in $\widehat\G\cup \widehat\N$ and one of them is not contained in $\widehat\G$.

In case (a), by induction using \ref{C'} for $\ZZ{W',v}{L',j}(G-u) = \ZZ{W',v}{L',j}(G \setminus uv)=z_j $, the angle between $z_{\ell}$ and $z_{\ell'}$ is at most $(d+b\eps)\theta\le (\Delta - 1 + \eps) \theta \le\pi/3$ (by \ref{C'}\ref{C'ii}) and $(1+K)^{-d} \leq |z_{\ell} / z_{\ell'}| \leq (1+K)^d$ by \ref{C'}\ref{C'iii}, so we can apply Lemma~\ref{lem:geom} to conclude that 
\[
|z_{\ell'}-z_\ell|\leq f(d,K,(d+b\eps)\theta)\cdot \max\{|z_\ell|,|z_{\ell'}|\} \leq (1+K)^dM,
\]
where the final inequality follows by \eqref{eq:Zlnew1}. Then
\begin{align*}
\frac{|x-y|}{|y|} 
\le \frac{(1 + \eps) |z_{\ell'} - z_{\ell}| }{|y|}
\le (1+\varepsilon) \frac{f(d,K,(d+b\eps)\theta)M(1+K)^d}{M\cdot C}< K\,,
\end{align*}
where the final inequality comes from the condition \eqref{eq:new2}.
 
For case (b), at least one of $\ell,\ell'$ is in $\widehat\B$, so we know that $d\leq \Delta-2$ (as the degree of $v$ in $H$ is at most $\Delta-1$ and $v$ has at least one fixed neighbour in $H$). 
We use the triangle inequality to obtain
\begin{align*}
|z_\ell-z_{\ell'}|\le |z_\ell| + |z_{\ell'}|
\le \big(\varepsilon^{\widehat m(\ell)} +\varepsilon^{\widehat m(\ell')} \big)M(1+K)^d
\le (1+\varepsilon)M(1+K)^d,
\end{align*}
using \eqref{eq:Zlnew1} for $j\in\{\ell,\ell'\}$ and that at least one of $\widehat m(\ell)$ and $\widehat m(\ell')$ is at least $1$ in this case. Therefore
\begin{equation*}
 \frac{|x-y|}{|y|}
 = \frac{(1 + \eps) |z_{\ell'} - z_{\ell}| }{|y|}
  \leq   \frac{(1+\varepsilon)^2M(1+K)^d}{MC}< K,
\end{equation*}
where the final inequality comes from the condition \eqref{eq:new1} (noting that we only need the condition for $d = 1, \ldots, \Delta - 2$).

In case (c), both $\ell$ and $\ell'$ are contained in $\widehat\G\cup \widehat\N$ and at least one of them is contained in $\widehat{N}$, and so $d\leq \Delta-2$ (as in case (b)). By induction using \ref{C'} for $\ZZ{W',v}{L',j}(G-u) = \ZZ{W',v}{L',j}(G \setminus uv)=z_j $, the angle between $z_{\ell}$ and $z_{\ell'}$ is at most $(d+b\eps)\theta \le (\Delta - 1 + \eps)\theta \le \pi/3$ by \ref{C'}\ref{C'ii}. Thus 
using Proposition~\ref{prop:simplegeom} and \eqref{eq:Zlnew1}, we obtain
\begin{align*}
|z_\ell-z_{\ell'}| \le \max\{|z_\ell|, |z_{\ell'}|\} \le M(1+K)^d,
\end{align*}
and so as before
\begin{equation*}
 \frac{|x-y|}{|y|}
 = \frac{(1 + \eps) |z_{\ell'} - z_{\ell}| }{|y|}
  \leq   \frac{(1 + \eps)(1+K)^dM}{MC}< K,
\end{equation*}
where the final inequality comes from the condition \eqref{eq:new1} (noting that we only need the condition for $d = 1, \ldots, \Delta - 2$).
This establishes \eqref{eq:bound x-y over y} in all cases.

Now, by Proposition~\ref{prop:simplegeom}, the angle $\gamma$ between $x$ and $y$ satisfies $\sin \gamma\le |x-y|/|y|\leq K$, and we conclude that $\gamma\le\arcsin(K)=\theta$ as required for \ref{B'}\ref{B'i}. 
Additionally, by the triangle inequality we have 
\[
\frac{|x|}{|y|}\leq \frac{|y|+|x-y|}{|y|}\leq 1+K\,,
\]
which gives \ref{B'}\ref{B'ii}. We now turn to \ref{C'}.

\subsubsection{Proof of \ref{C'}}
We start with \ref{C'i}, that is we will show that for any $\ell\in\G \cup \N$, $\ZZ{W',u}{L',\ell}(G)\neq 0$. Since we have already proved \ref{A'} and \ref{B'} for the case of $f+1$ free vertices and since we have $f+1$ free vertices for $\ZZ{W',u}{L',\ell}(G)$, we might hope to immediately apply \ref{A'}; the only problem is that $W'u$ may not a leaf- independent set, so we will modify $G$ first. 

Let $v_1,\dots,v_d$ be the free neighbours of $u$. We construct a new graph $H$ from $G$ by adding vertices $u_1, \ldots, u_d$ to $G$ and replacing each edge $uv_i$ with $u_iv_i$ for $i=1, \ldots, d$, while keeping all other edges of $G$ unchanged (so note that $u$ is only adjacent to its fixed neighbours in $H$). Each edge $e$ of $H$ is assigned value $\var'_e$ where $\var'_e = \var_e$ if $e$ is an edge of $G$ and $\var'_{u_iv_i} = \var_{uv_i}$ for the new edges $uv_i$. 
See Figure~\ref{fig:ballH} for an illustrative example.

Then by construction we have 
\begin{equation}\label{eq:G to H}
\ZZ{W',u}{L',\ell}(G)=\ZZ{W',u,u_1,\ldots,u_d}{L',\ell,\ell,\ldots,\ell}(H).
\end{equation}

Notice that in $H$, the vertex $u$ together with its neighbours form a star $S$ that is disconnected from the rest of $H$ (and all vertices of $S$ are in $W = W'u$ so they are fixed). Thus $H$ is the disjoint union of $S$ and some graph $\widehat{H}$. 
Thus the partition function $z:=\ZZ{W',u,u_1,\ldots,u_d}{L',\ell,\ell,\ldots,\ell}(H)$ factors as 
\begin{equation}
\label{eq:starimproved}
\ZZ{W',u,u_1,\ldots,u_d}{L',\ell,\ell,\ldots,\ell}(H)
= \ZZ{W',u_1,\ldots,u_d}{L',\ell,\ldots,\ell}(\widehat{H}) \cdot
\ZZ{W',u}{L',\ell}(S);
\end{equation}
here we abuse notation by having a list $W'u_1\ldots u_d$ (resp. $W'u$) that may contain vertices not in $\widehat{H}$ (resp. $S$); such vertices and their corresponding colour should simply be ignored.

The fixed vertices in $\widehat{H}$ form a leaf-independent set, so we can apply $\ref{A'}$ to conclude that the first factor above is nonzero. It is also clear that second factor above is nonzero because all vertices in $S$ are fixed and $\ell\in \mathcal{G}\cup \N$. Hence $z \not=0$ as required.

To prove part \ref{C'ii}, we will apply \ref{B'} to $\widehat{H}$ with $W'uu_1\ldots u_d$ fixed, which (as above) is possible since we already proved \ref{B'} for $f+1$ free vertices and $W'uu_1\ldots u_d$ restricted to $\widehat{H}$ is a leaf-independent set. 
By \ref{B'}\ref{B'i} the angle between
\[
\ZZ{W',u_1,\ldots,u_{d-1},u_d}{L',\ell,\ldots,\ell,\ell'}(\widehat{H})\qquad\text{and}\qquad\ZZ{W',u_1,\ldots,u_{d-1},u_d}{L',\ell,\ldots,\ell,\ell}(\widehat{H})
\]
is at most $\theta$. Continuing to change the label of each $u_i$ one step at the time, we conclude that the angle between
\[
\ZZ{W',u_1,\ldots,u_d}{L',\ell',\ldots,\ell'}(\widehat{H})\qquad\text{and}\qquad\ZZ{W',u_1,\ldots,u_d}{L',\ell,\ldots,\ell}(\widehat{H})
\]
is at most $d\theta$.
We next notice that since for \ref{C'ii} we assume $\ell,\ell' \in \G \cup \N$, changing the colour of $u$ from $\ell$ to $\ell'$ can only change $\ZZ{W',u}{L',\ell}(S)$ by $\deg(u) \leq \Delta-d=b$ factors, each of argument at most $\eps \theta$ thus giving a total change of angle by at most $b \eps \theta$.
Hence by \eqref{eq:starimproved}, we therefore conclude that the angle between $\ZZ{W',u}{L',\ell}(G)$ and $\ZZ{W',u}{L',\ell'}(G)$ is at most $d\theta+b\eps\theta$.

To prove \ref{C'iii} we observe that we can write for any $j,\ell\in [k]$ the telescoping product,
\begin{equation}\label{eq:telescope prime improved}
\frac{\ZZ{W',u_1,\ldots,u_d}{L',\ell,\ldots,\ell}(\widehat{H})}{\ZZ{W',u_1,\ldots,u_d}{L',j,\ldots,j}(\widehat{H})}=
\frac{\ZZ{W',u_1,\ldots,u_d}{L',\ell,\ldots,\ell}(\widehat{H})}{\ZZ{W',u_1,\ldots, u_{d-1},u_d}{L',\ell,\ldots,\ell,j}(\widehat{H})}\cdots
\frac{\ZZ{W',u_1,u_2,\ldots,u_d}{L',\ell,j,\ldots,j}(\widehat{H})}{\ZZ{W',u_1,\ldots,u_d}{L',j,\ldots,j}(\widehat{H})}\,,
\end{equation}
and consequently by \ref{B'}\ref{B'ii}, it follows that 
\begin{equation}\label{eq:consequence of telescoping prime improved}
(1+ K)^{-d}\le \frac{|\ZZ{W',u_1,\ldots,u_d}{L',\ell,\ldots,\ell}(\widehat{H})|}{|\ZZ{W',u_1,\ldots,u_d}{L',j,\ldots,j}(\widehat{H})|}\le (1+ K)^d.
\end{equation}
Next we observe that in $S$ when changing the colour of $u$  from $\ell\in[k]$ to a good colour $j\in\G$, we have
\[
\big|\ZZ{W',u}{L',\ell}(S)\big|\le\varepsilon^{m(\ell)}\big|\ZZ{W',u}{L',j}(S)\big|\,,
\]
and so by \eqref{eq:starimproved} we have
\[
\big|\ZZ{W',u,u_1,\ldots,u_d}{L',\ell,j,\ldots,j}(H)\big|\le\varepsilon^{m(\ell)}\big|\ZZ{W',u,u_1,\ldots,u_d}{L',j,j,\ldots,j}(H)\big|\,,
\]
Combining the above inequality with \eqref{eq:consequence of telescoping prime improved}, we obtain \ref{C'iii} as required:
\[
\frac{|\ZZ{W',u}{L',\ell}(G)|}{|\ZZ{W',u}{L',j}(G)|}=
\frac{|\ZZ{W',u,u_1,\ldots,u_d}{L',\ell,\ell,\ldots,\ell}(H)|}{|\ZZ{W',u,u_1,\ldots,u_d}{L',j,j,\ldots,j}(H)|}\le
\eps^{m(\ell)}(1+K)^{d}.
\]
This completes the proof.

\quad 
\\
In Table~\ref{table:small_delta_improved} we list the improvements that Theorem~\ref{thm:ballimprove} gives; we give the improved values for $k_\Delta$ together with values of $K$ and $\theta$ that allow the reader to check that the conditions of Theorem~\ref{thm:ballimprove} are met with strict inequality for $\eps=0$.

% \begin{table}[h]
% \begin{center}
% \begin{tabular}{c|l|l|c}
% $\Delta$ & $K$ & $\theta$ & $k_\Delta$ \\
% \hline
% 3  &  0.4124  &  0.4251  &  6 \\
% 4  &  0.2900  &  0.2943  &  8 \\
% 5  &  0.2224  &  0.2244  &  11 \\
% 6  &  0.1814  &  0.1826  &  14 \\
% 7  &  0.1536  &  0.1543  &  17 \\
% 8  &  0.1334  &  0.1339  &  20 \\
% 9  &  0.1179  &  0.1183  &  23 \\
% 10  &  0.1057  &  0.1060  &  26 \\
% 11  &  0.0959  &  0.0961  &  29 \\
% 12  &  0.0877  &  0.0879  &  32 \\
% 13  &  0.0800  &  0.0802  &  35
% \end{tabular}
% \end{center}
% \caption{Bounds for the number of colours and values for $K$ and $\theta$ for small values of $\Delta$ for Theorem~\ref{thm:ballimprove}.\label{table:small_delta_improved}}
% \end{table}

\begin{table}[h]
	\caption{Bounds for the number of colours and values for $K$ and $\theta$ for small values of $\Delta$ for Theorem~\ref{thm:ballimprove}.\label{table:small_delta_improved}}
	\begin{center}
	\begin{tabular}{cllc}
		\toprule
	$\Delta$ & $K$ & $\theta$ & $k_\Delta$ \\
	\midrule
	3  &  0.4124  &  0.4251  &  6 \\
	4  &  0.2900  &  0.2943  &  8 \\
	5  &  0.2224  &  0.2244  &  11 \\
	6  &  0.1814  &  0.1826  &  14 \\
	7  &  0.1536  &  0.1543  &  17 \\
	8  &  0.1334  &  0.1339  &  20 \\
	9  &  0.1179  &  0.1183  &  23 \\
	10  &  0.1057  &  0.1060  &  26 \\
	11  &  0.0959  &  0.0961  &  29 \\
	12  &  0.0877  &  0.0879  &  32 \\
	13  &  0.0800  &  0.0802  &  35 \\
	\bottomrule
	\end{tabular}
	\end{center}
	\end{table}

\section{Concluding remarks and questions}\label{sec:conclusion}
In the present paper we have established that if $k$ is an integer satisfying $k\geq e\Delta+1$, then there exists an open set $U$ containing $[0,1)$ such that the for any graph $G$ of maximum degree at most $\Delta$ and $w\in U$, $\bZ(G;k,w)\neq 0$.
For small values of $\Delta$ we have shown that we can significantly improve on $e$.
We raise the following question.
\begin{question}
Is it true that for each $\Delta\in\mathbb{N}_{\geq 3}$ there exists an open set $U=U_\Delta$ containing $[0,1]$ such that for any integer $k$ satisfying $k\geq \Delta+1$ and any graph $G$ of maximum degree at most $\Delta$ and $w\in U$, $\bZ(G;k,w)\neq 0$?
\end{question}

As mentioned in the introduction, Barvinok's approach for proving zero-free regions for partition functions has been used for several types of partition functions, see~\cite{Ba16,BaS17,B17,R18}.
This of course raises the question as to which of these partition functions our ideas could be applied.
In particular it would be interesting to apply our ideas to partition functions of edge-colouring models (a.k.a.\ tensor networks, or Holant problems). This framework may be useful to study the zeros of the Potts model on line graphs.

Implicit in our proof of Theorem~\ref{thm:ball2} is an iteration of a complex-valued dynamical system, which for $k=2$ essentially coincides with the dynamical system analysed in \cite{LSS18,PR18b}.
Given the recent success of the use of methods from the field of complex dynamical systems to identify zero-fee regions and the location of zeros of the partition function of the hardcore model~\cite{PR18a} and the partition function of the Ising model~\cite{LSS18,PR18b}, it seems natural to study this dynamical system.
 We intend to expand on this in future work.

\section*{Acknowledgements}
We are grateful to two anonymous referees for helpful comments on the relation with statistical physics.

\end{document}